\newcommand{\pd}{\partial}
\newcommand{\R}{\mathbb{R}}
\newcommand{\Z}{\mathbb{Z}}
\newcommand{\<}{\langle}
\renewcommand{\>}{\rangle}
\newcommand{\cl}{\mbox{\scriptsize cl}}
\newcommand{\grad}{\nabla}
\renewcommand{\div}{\mbox{div}}
\newcommand{\la}{\langle}
\newcommand{\ra}{\rangle}
\newcommand{\eps}{\varepsilon}
\newtheorem{theorem}{Theorem}
\newtheorem{lemma}{Lemma}
\title{Discrete Symbol Calculus}
\author{
\begin{tabular}{c}
	Laurent Demanet\\
	 Department of Mathematics \\
	 Stanford University\\
	 450 Serra Mall \\
	 Stanford CA94305
\end{tabular}
\qquad
\begin{tabular}{c}
	Lexing Ying\\
	 Department of Mathematics \\
	 University of Texas at Austin\\
	 1 University Station/C1200\\
	  Austin TX78712 
\end{tabular}
}
\date{June 2008}		
\begin{document}
\maketitle

\begin{abstract}

  This paper deals with efficient numerical representation and
  manipulation of differential and integral operators as symbols in
  phase-space, i.e., functions of space $x$ and frequency $\xi$. The
  symbol smoothness conditions obeyed by many operators in connection
  to smooth linear partial differential equations allow to write
  fast-converging, non-asymptotic expansions in adequate systems of
  rational Chebyshev functions or hierarchical splines. The classical
  results of closedness of such symbol classes under multiplication,
  inversion and taking the square root translate into practical
  iterative algorithms for realizing these operations directly in the
  proposed expansions. Because symbol-based numerical methods handle
  operators and not functions, their complexity depends on the desired
  resolution $N$ very weakly, typically only through $\log N$ factors.
  We present three applications to computational problems related to
  wave propagation: 1) preconditioning the Helmholtz equation, 2)
  decomposing wavefields into one-way components and 3) depth-stepping
  in reflection seismology.


\end{abstract}

{\bf Acknowledgements.}
The first author is partially supported by an NSF grant. The second author is
partially supported by an NSF grant, a Sloan Research
Fellowship, and a startup grant from the University of Texas at Austin.

\section{Introduction}









A typical problem of interest in this paper is the efficient
representation of functions of elliptic linear operators such as
\[
A = I - \div (\alpha(x) \nabla \cdot ),
\]
where $\alpha(x) > c > 0$ is smooth, and $x \in [0,1]^2$ with periodic
boundary conditions. We have in mind the inverse, the square root, and
the exponential of $A$ as important examples of functions of $A$.
While most numerical methods for inverting $A$, say, would manipulate
a right-hand side until convergence, and leverage sparsity or other
properties of $A$ in doing so, the scope of this paper is quite
different. Indeed, we present expansions and iterative algorithms for
manipulating operators as ``symbols'', which make no or little
reference to the functions of $x$ to which these operators may later
be applied.

The central question is of course that of choosing a tractable
representation for differential and integral operators. If a function
$f(x)$ has $N$ degrees of freedom---if for instance it is sampled on
$N$ points---then a direct representation of operators acting on
$f(x)$ would in general require $N^2$ degrees of freedom. There are
many known methods for bringing down this count to $O(N)$ or $O(N \log
N)$ in specific cases, such as leveraging sparsity, computing
convolutions via FFT, low-rank approximations, fast summation methods
\cite{greengard,hackbusch2}, wavelet or x-let expansions \cite{BCR}, partitioned SVD
and H-matrices \cite{borm1,hackbusch1}.

The framework presented in this paper is different, in the sense that
we aim for a complexity essentially independent of $N$, i.e., at most
a low-degree polynomial of $\log N$, for representing and combining
operators belonging to standard classes. Like the methods above, the
operator is ``compressed'' in such a way that applying it to functions
remains simple; it is only for this operation that the complexity
needs to be greater than $N$, in our case $O(N \log N)$.

\subsection{Smooth symbols}\label{sec:closedness}

Let $A$ denote a generic differential or singular integral operator,
with kernel representation
\[
Af(x) = \int k(x,y) f(y) \, dy, \qquad x, y \in \R^d.
\]
Expanding the distributional kernel $k(x,y)$ in some basis would be
cumbersome because of the presence of a singularity along the diagonal
$x = y$. For this reason we choose to consider operators as
\emph{pseudodifferential symbols} $a(x,\xi)$, by considering their
action on the Fourier transform\footnote{Our conventions in this
  paper:
\[
\hat{f}(\xi) = \int e^{-2 \pi i x \cdot \xi} f(x) \, dx. \qquad f(x) = \int e^{2 \pi i x \cdot \xi} \hat{f}(\xi) \, d\xi.
\]
} $\hat{f}(\xi)$ of $f(x)$;
\[
Af(x) = \int e^{2 \pi i x \cdot \xi} a(x,\xi) \hat{f}(\xi) \, d\xi.
\]
One often writes $a(x,D)$ for $A$, where $D = - i \nabla_x/(2 \pi)$. In this
representation, the singularities of $k(x,y)$ are turned into the
oscillating factor $e^{2 \pi i x \cdot \xi}$, which can be discounted
by focusing on the symbol $a(x,\xi)$. The latter is usually smooth,
and in a very peculiar way. It is the special form of the smoothness
estimates for $a$---which we now describe---that guarantees the
efficiency of the discretizations proposed in this paper.

A symbol defined on $\R^d \times \R^d$ is said to be
pseudodifferential of order $m$ (and type $(1,0)$) if it obeys
\begin{equation}\label{eq:type10}
|\pd^\alpha_\xi \pd^\beta_x a(x,\xi)| \leq C_{\alpha\beta} \< \xi \>^{m - |\alpha|}, \qquad \< \xi \> \equiv (1+|\xi|^2)^{1/2},
\end{equation}
for all multi-indices $\alpha, \beta$. This symbol class is denoted
$S^m$; the operator corresponding to some $a$ in this class is denoted
$a(x,D)$, and belongs by definition to
the class $\bm{\Psi}^m$. Manifestly, one power of $\< \xi \>$ is
gained for each differentiation, meaning that the larger $\< \xi \>$,
the smoother $a$. For instance, the symbols of differential operators
are polynomials in $\xi$ and obey (\ref{eq:type10}) when they have
$C^\infty$ coefficients. Large classes of singular integral operators
also have symbols in the class $S^m$ \cite{Ste}.



The standard treatment of pseudodifferential operators makes the
further assumption that some symbols can be represented as
polyhomogeneous series, such as
\begin{equation}\label{eq:classical}
a(x,\xi) \sim \sum_{j \geq 0} a_j \left( x,\mbox{arg } \xi \right) |\xi|^{m-j},
\end{equation}
which defines the ``classical'' symbol class $S^m_{\cl}$ when the
$a_j$ are of class $C^\infty$. Corresponding operators are said to be
in the class $\bm{\Psi}^m_{\cl}$. The series should be understood as
an asymptotic expansion; it converges only when adequate cutoffs
smoothly removing the origin multiply each term. Only then, the series
does not converge to $a(x,\xi)$, but to an approximation that differs
from $a$ by a smoothing remainder $r(x,\xi)$, smoothing in the sense
that $| \pd^\alpha_{\xi} \pd^\beta_x r(x,\xi)| = O(\< \xi
\>^{-\infty})$. For instance, an operator is typically transposed,
inverted, etc. modulo a smoothing remainder \cite{Hor}.

The subclass (\ref{eq:classical}) is central for applications---it is
the cornerstone of theories such as geometrical optics---but the
presence of remainders is a nonessential feature that should be
avoided in the design of efficient numerical methods. The lack of
convergence in (\ref{eq:classical}) may be acceptable in the course of
a mathematical argument, but it takes great additional effort to turn
such series into accurate numerical methods; see \cite{Stolk1} for an
example. In a sense, the goal of this paper is to find adequate
substitutes for (\ref{eq:classical}) that promote asymptotic series
into fast-converging expansions.

There are in general no explicit formulas for the symbols of functions
of an operator. Fortunately, some results in the literature guarantee
exact closedness of the symbol classes (\ref{eq:type10}) or
(\ref{eq:classical}) under inversion and taking the square root,
without smoothing remainders. A symbol $a \in S^m$, or an operator
$a(x,D) \in \bm{\Psi}^m$, is said to be elliptic when there exists $R
> 0$ such that
\[
|a^{-1}(x,\xi)| \leq C \, |\xi|^{-m}, \qquad \mbox{when} \qquad |\xi| \geq R. 
\]

\begin{itemize}
\item It is a basic result that if $A \in \bm{\Psi}^{m_1}, B \in
  \bm{\Psi}^{m_2}$, then $AB \in \bm{\Psi}^{m_1 + m_2}$. See for
  instance Theorem 18.1.8 in \cite{Hor}, Volume 3.
\item It is also a standard fact that if $A \in \bm{\Psi}^{m}$, then
  its adjoint $A^{*} \in \bm{\Psi}^{m}$.
\item If $A \in \bm{\Psi}^m$, and $A$ is elliptic and
  invertible\footnote{In the sense that $A$ is a bijection from
	$H^m(\R^d)$ to $L^2(\R^d)$, hence obeys $\| A f \|_{L^2} \leq C \| f
	\|_{H^m}$. Ellipticity, in the sense in which it is defined for
	symbols, obviously does not imply invertibility.} on $L^2$, then
  $A^{-1} \in \bm{\Psi}^{-m}$. This result was proved by Shubin in 1978
  in \cite{Shu}.
\item For the square root, we also assume ellipticity and
  invertibility. It is furthermore convenient to consider operators on
  compact manifolds, in a natural way through Fourier transforms in each
  coordinate patch, so that they have discrete spectral expansions. A
  square root $A^{1/2}$ of an elliptic operator $A$ with spectral
  expansion $A = \sum_j \lambda_j E_j$, where $E_j$ are the spectral
  projectors, is simply
  \begin{equation}\label{eq:As}
	A^{1/2} = \sum_j \lambda_j^{1/2} E_j,
  \end{equation}
  with of course $(A^{1/2})^2 = A$. In 1967, Seeley \cite{See} studied
  such expressions for elliptic $A \in \bm{\Psi}^m_{\cl}$, in the
  context of a much more general study of complex powers of elliptic
  operators. If in addition $m$ is an even integer, and an adequate
  choice of branch cut is made in the complex plane, then Seeley showed
  that $A^{1/2} \in \bm{\Psi}^{m/2}_{\cl}$; see \cite{Sog} for an
  accessible proof that involves the complex contour ``Dunford''
  integral reformulation of (\ref{eq:As}).
\end{itemize}

We do not know of a corresponding closedness result under taking the
square root, for the non-classical class $\bm{\Psi}^m$. In practice,
we will also manipulate operators that come from PDE on bounded
domains with certain boundary conditions; the extension of the theory
of pseudodifferential operators to bounded domains is a difficult
subject that this paper has no ambition of addressing. Let us also
mention in passing that the exponential of an elliptic,
non-self-adjoint pseudodifferential operator is not in general
pseudodifferential itself.

Numerically, it is easy to check that smoothness of symbols is
remarkably robust under inversion and taking the square root of the
corresponding operators, as the following simple one-dimensional
example shows.

Let $A := 4\pi^2 I - \div (\alpha(x) \grad)$ on the periodic interval $[0,1]$ where
$\alpha(x)$ is a random bandlimited function shown in Figure
\ref{fig:sec11}(a). The symbol of this operator is 
\[
a(x,\xi) = 4 \pi^2 (1 + \alpha(x) |\xi|^2) - 2 \pi i \nabla \alpha(x) \cdot \xi,
\]
which is of order 2. In Figure
\ref{fig:sec11}(b), we plot the values of $a(x,\xi) \la\xi\ra^{-2}$
for $x$ and $\xi$ on a Cartesian grid.

Since $A$ is elliptic and invertible, its inverse $C = A^{-1}$ and
square root $D = A^{1/2}$ are both well defined. Let use $c(x,\xi)$
and $d(x,\xi)$ to denote their symbols. From the above theorems, we
know that the orders of $c(x,\xi)$ and $d(x,\xi)$ are respectively
$-2$ and $1$. We do not believe explicit formulae exist for these
symbols, but the numerical values of $c(x,\xi) \la\xi\ra^{2}$ and
$d(x,\xi) \la\xi\ra^{-1}$ are shown in Figure \ref{fig:sec11}(c) and
(d), respectively. These plots demonstrate regularity of these symbols
in $x$ and in $\xi$; observe in particular the disproportionate
smoothness in $\xi$ for large $|\xi|$, as predicted by the class
estimate (\ref{eq:type10}).

\begin{figure}[h!]
  \begin{center}
    \begin{tabular}{cc}
      \includegraphics[height=2in]{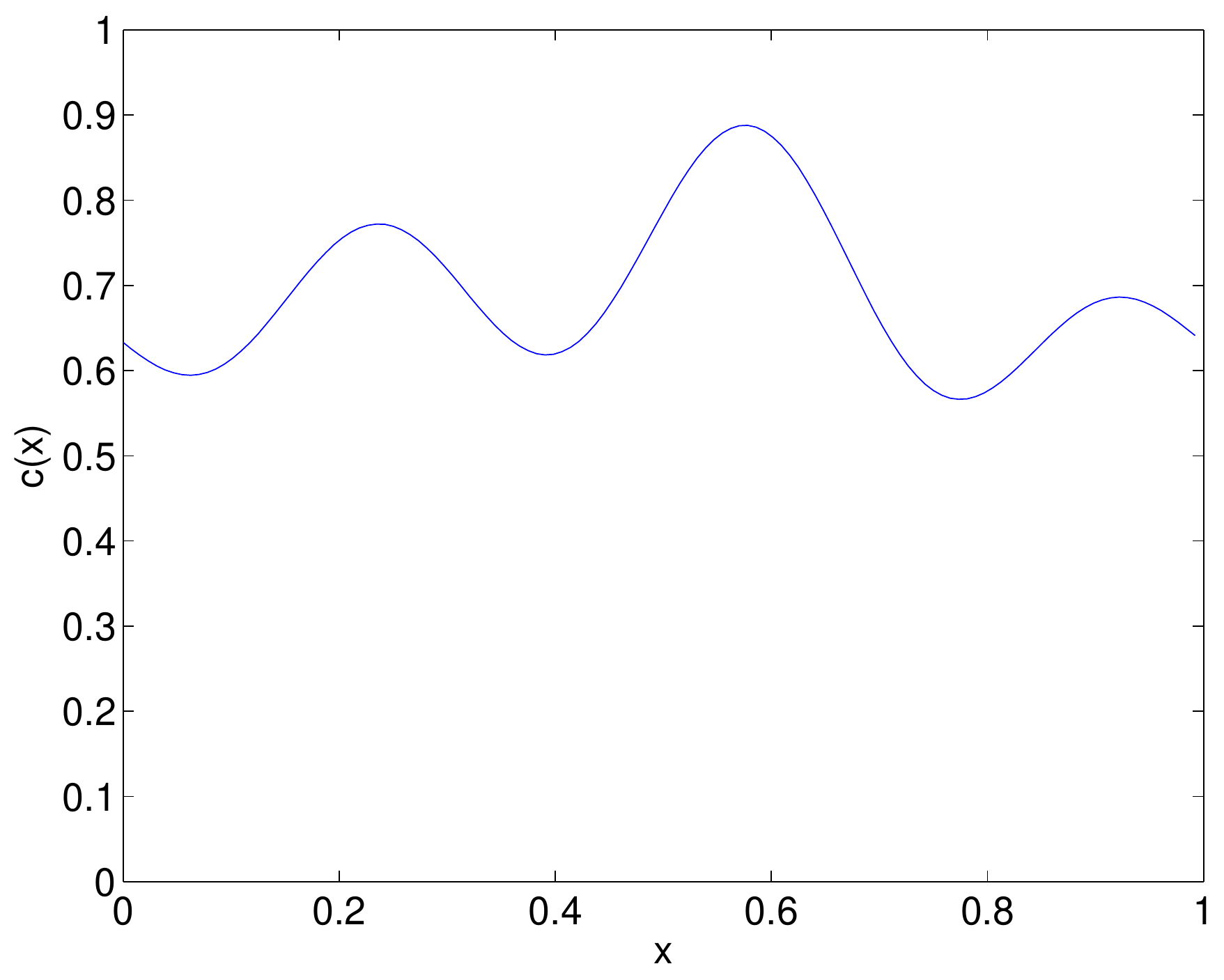} & \includegraphics[height=2in]{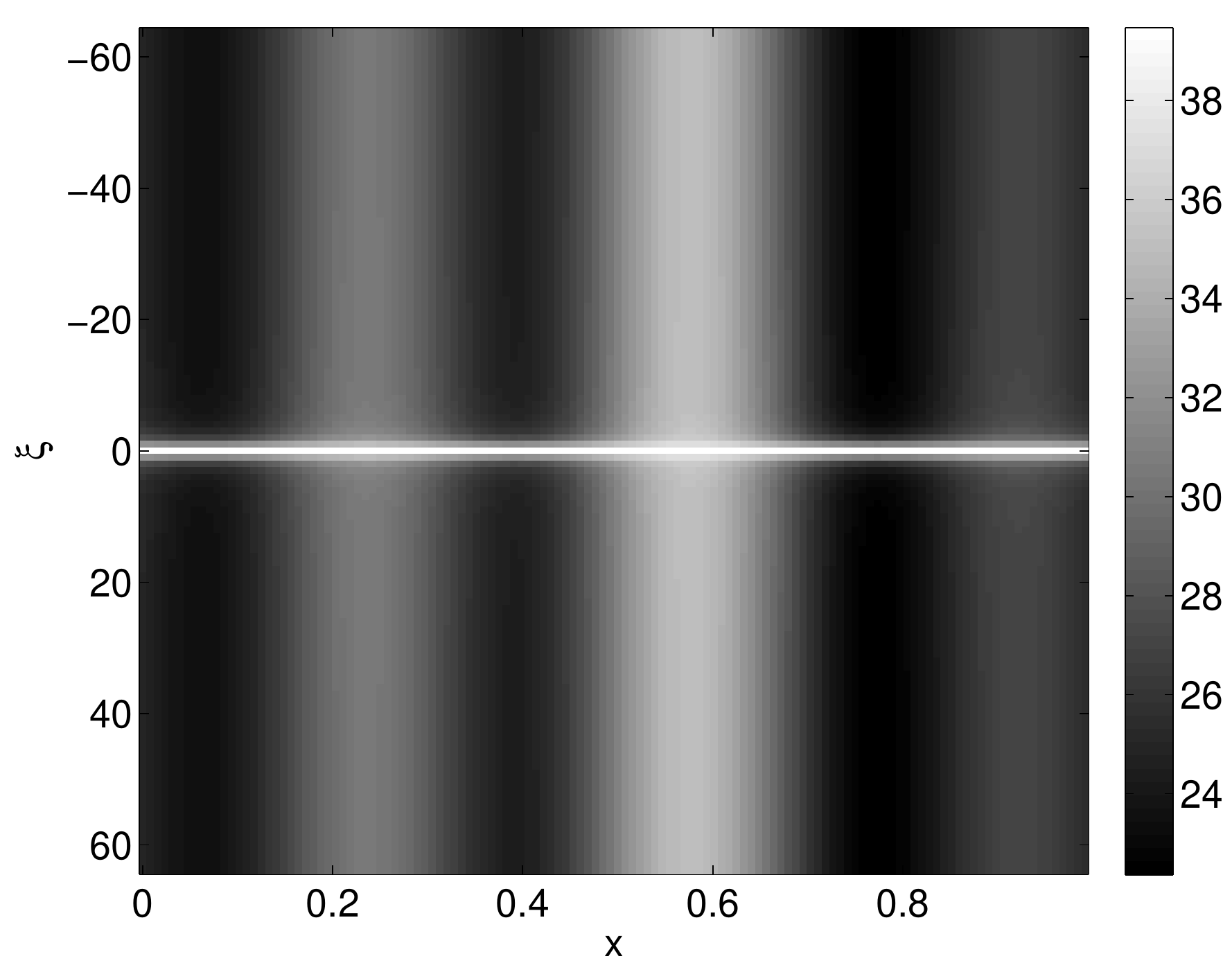}\\
      (a) & (b)\\
      \includegraphics[height=2in]{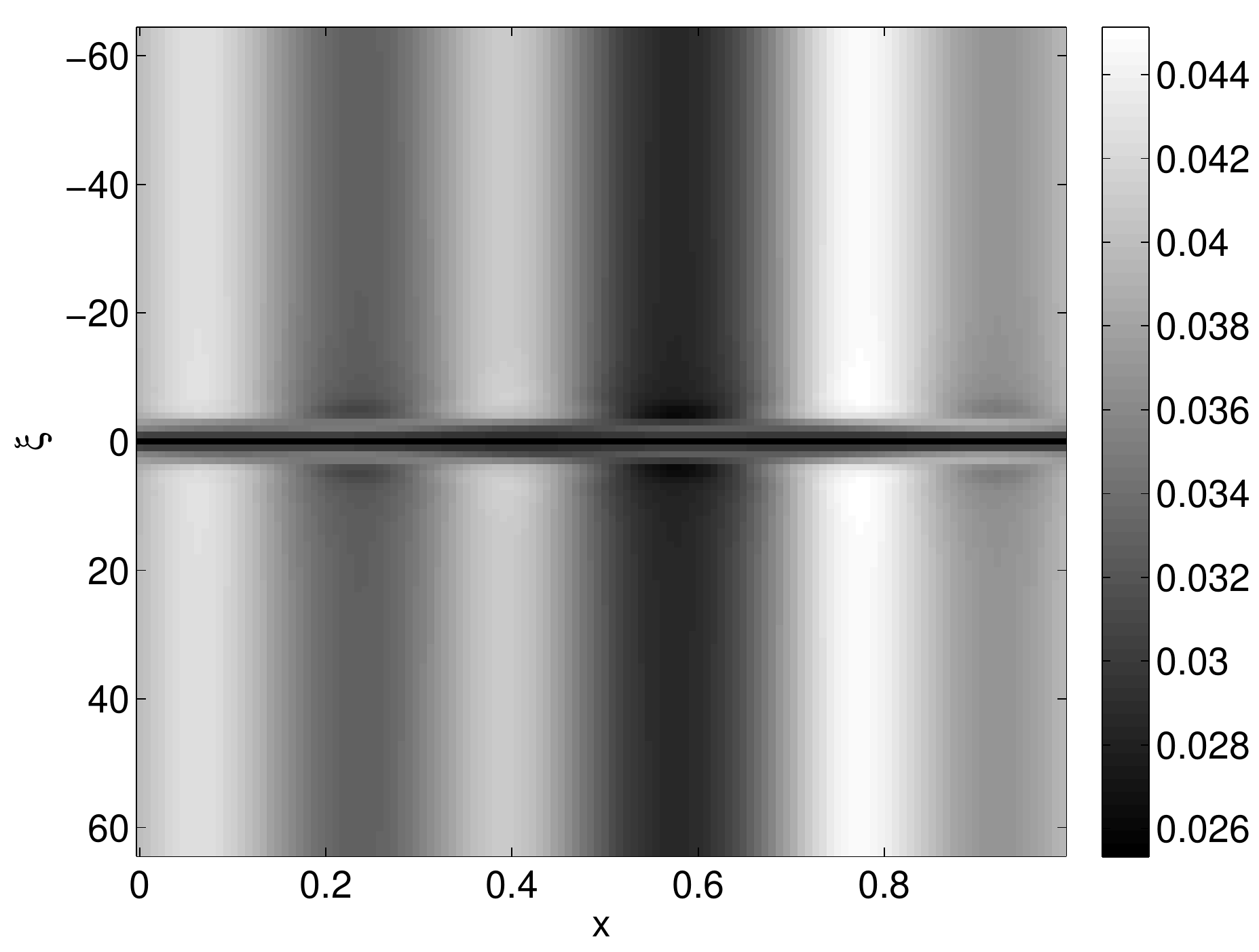} & \includegraphics[height=2in]{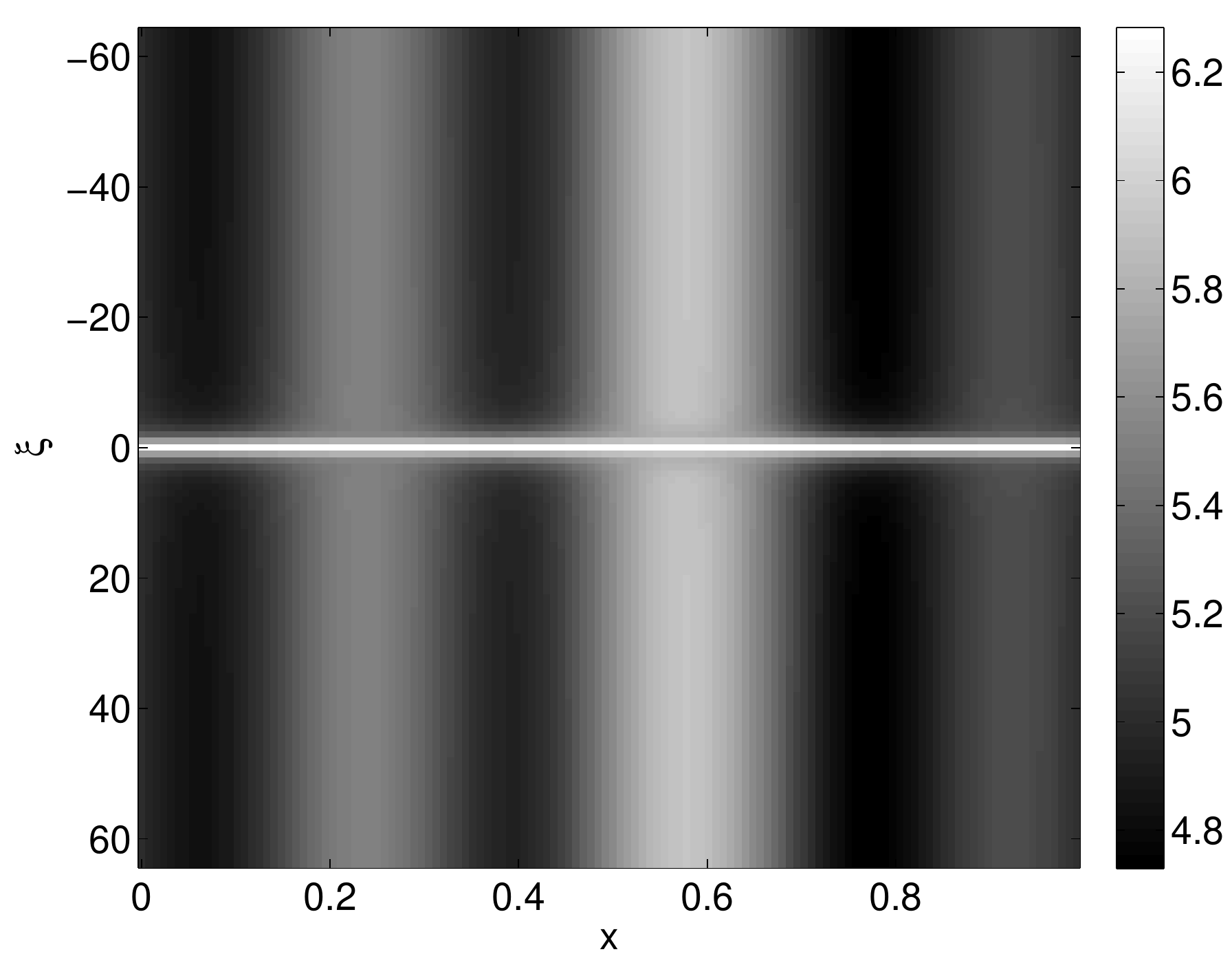}\\
      (c) & (d) 
    \end{tabular}
  \end{center}
  \caption{Smoothness of the symbol in $\xi$.
    (a) The coefficient $\alpha(x)$.
    (b) $a(x,\xi)\la\xi\ra^{-2}$ where $a(x,\xi)$ is the symbol of $A$.
    (c) $c(x,\xi)\la\xi\ra^{ 2}$ where $c(x,\xi)$ is the symbol of $C = A^{-1}$.
    (d) $d(x,\xi)\la\xi\ra^{-1}$ where $d(x,\xi)$ is the symbol of $D = A^{1/2}$.
  }
  \label{fig:sec11}
\end{figure}

\subsection{Symbol expansions}

Figure \ref{fig:sec11} suggests that symbols are not only smooth, but
that they should be highly \emph{separable} in $x$ vs. $\xi$. So we
will use expansions of the form
\begin{equation}\label{eq:sep0}
a(x,\xi) = \sum_{\lambda} a_{\lambda, \mu} e_\lambda(x) g_\mu(\xi) \la \xi\ra^{d_a},
\end{equation}
where $e_\lambda$ and $g_\mu$ are to be determined, and $\la
\xi\ra^{d_a} \equiv (1+|\xi|^2)^{d_a/2}$ encodes the order $d_a$ of
$a(x,\xi)$. This choice is in line with recent observations of Beylkin
and Mohlenkamp \cite{BM} that functions and kernels in high dimensions
should be represented in separated form. In this paper we have chosen
to focus on two-dimensional $x$, i.e. $(x,\xi) \in \R^4$, which is
already considered high-dimensional by the standards of numerical
analysts. For all practical purposes the curse of dimensionality would
prohibit any direct, even coarse sampling in $\R^4$.

The functions $e_\lambda(x)$ and $g_\mu(\xi)$ should be chosen such
that the interaction matrix $a_{\lambda, \mu}$ is as small as possible
after accurate truncation. This choice also depends on the domain over
which the operator is considered. In what follows we will assume that
the $x$-domain is the periodized unit square $[0,1]^2$ in two
dimensions. Accordingly it makes sense to take for $e_\lambda(x)$ the
complex exponentials $e^{2\pi i x \cdot \lambda}$ of a Fourier series.
The choice of $g_\mu(\xi)$ is more delicate, as $x$ and $\xi$ do not
play symmetric roles in the estimate (\ref{eq:type10}). In a nutshell,
we need adequate basis functions for smooth functions on $\R^2$ that
behave like a polynomial of $1/|\xi|$ as $\xi \to \infty$, and
otherwise present smooth angular variations. We present two solutions
in what follows:
\begin{itemize}
\item A \emph{rational Chebyshev interpolant}, where $g_\mu(\xi)$ are
  complex exponentials in angle $\theta = $arg $\xi$, and scaled
  Chebyshev functions in $|\xi|$, where the scaling is an algebraic
  map $s = \frac{|\xi| - L}{|\xi| + L}$. More details in Section
  \ref{sec:Cheb}.
\item A \emph{hierarchical spline interpolant}, where $g_\mu(\xi)$ are
  spline functions with control points placed in a multiscale way in
  the frequency plane, in such a way that they become geometrically
  scarcer as $|\xi| \to \infty$. More details in Section \ref{sec:HS}.
\end{itemize}

Since we are considering $x$ in the periodized square $[0,1]^2$, the
Fourier variable $\xi$ is restricted to having integer values, i.e.,
$\xi \in \Z^2$, and the Fourier transform should be replaced by a
Fourier series. Pseudodifferential operators are then defined through
\begin{equation}\label{eq:psido-sum}
a(x,D) f(x) = \sum_{\xi \in \Z^2} e^{2 \pi i x \cdot \xi} a(x,\xi) \hat{f}(\xi),
\end{equation}
where $\hat{f}(\xi)$ are the Fourier series coefficients of $f$. That
$\xi$ is discrete in this formula should not be a distraction: it is
the smoothness of the underlying function of $\xi \in \R^2$ that
dictates the convergence rate of the proposed expansions.


The following results quantify the performance of the two approximants
introduced above. We refer to an approximant as being truncated to $M$
terms when all but at most $M$ elements are put to zero in the
interaction matrix $a_{\lambda,\mu}$ in (\ref{eq:sep0}).

\begin{theorem}\label{teo:Cheb} \emph{(Rational Chebyshev
    approximants)}.  Assume that $a \in S^m_{\cl}$, that it is
  properly supported, and assume furthermore that the $a_j$ in
  equation (\ref{eq:classical}) have \emph{tempered growth}, in the
  sense that there exists $Q, R > 0$ such that
  \begin{equation}\label{eq:tempered}
    | \pd^\alpha_{\theta} \pd_x^\beta  a_j(x,\theta)| \leq Q_{\alpha, \beta} \cdot R^j.
  \end{equation}
  Denote by $\tilde{a}$ the rational Chebyshev expansion of $a$
  (introduced in Section \ref{sec:Cheb}), properly truncated to $M$
  terms. Call $\tilde{A}$ and $A$ the corresponding pseudodifferential
  operators on $L^2([0,1]^2)$ defined by (\ref{eq:psido-sum}). Then,
  there exists a choice of $M$ obeying
  \[
  M \leq C_n \cdot \eps^{-1/n}, \qquad \forall n > 0,
  \]
  for some $C_n > 0$, such that
  \[
  \| \tilde{A} - A \|_{H^m([0,1]^2) \to L^2([0,1]^2)} \leq \epsilon.
  \]
\end{theorem}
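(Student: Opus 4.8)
The plan is to factor out the order, reduce the operator bound to an $L^2$ bound for an order-zero symbol, and then show that the error symbol has super-algebraically small $x$-Fourier coefficients by exploiting the joint smoothness guaranteed by (\ref{eq:tempered}). Write $b(x,\xi) = a(x,\xi)\la\xi\ra^{-m}$, so that $b \in S^0_{\cl}$ and $\tilde a = \tilde b\,\la\xi\ra^m$, where $\tilde b$ is the truncation of the separated expansion of $b$. Since $\la D\ra^{-m}$ is a Fourier multiplier, right-composition sends a symbol $q(x,\xi)$ exactly to $q(x,\xi)\la\xi\ra^{-m}$ with no lower-order corrections; hence $(A-\tilde A)\la D\ra^{-m}$ has symbol exactly $b-\tilde b$, and $\|\tilde A - A\|_{H^m\to L^2} = \|(A-\tilde A)\la D\ra^{-m}\|_{L^2\to L^2}$. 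Expanding any order-zero symbol $\sigma(x,\xi)=\sum_\eta \sigma_\eta(\xi)e^{2\pi i x\cdot\eta}$ in its $x$-Fourier series and using that multiplication by $e^{2\pi i x\cdot\eta}$ is an $L^2$-isometry while $\sigma_\eta(D)$ is a multiplier with norm $\sup_\xi|\sigma_\eta(\xi)|$, one obtains the clean bound $\|\sigma(x,D)\|_{L^2\to L^2}\le \sum_\eta\sup_\xi|\sigma_\eta(\xi)|$. It therefore suffices to prove $\sum_\eta\sup_\xi|(b-\tilde b)_\eta(\xi)|\le\eps$.

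The heart of the argument is a uniform joint-smoothness estimate for $b$ in the three tensor directions: the torus variable $x$, the angle $\theta=\arg\xi$, and the mapped radial variable $s=\frac{|\xi|-L}{|\xi|+L}$. Write $r=|\xi|$ and set $u=1/r$; the polyhomogeneous structure (\ref{eq:classical}) gives $b\sim(1+u^2)^{-m/2}\sum_{j\ge0}a_j(x,\theta)\,u^j$ for large $r$. The tempered-growth hypothesis (\ref{eq:tempered}) with $\alpha=\beta=0$ gives $|a_j|\le Q_{0,0}R^j$, so this series converges absolutely for $u<1/R$: the asymptotic expansion becomes a genuinely convergent one, and $b$ is real-analytic in $u$ in a fixed neighborhood of $u=0$. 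Crucially, $u$ is an analytic function of $s$ near $s=1$ (which corresponds to $r=\infty$), with $du/ds=-1/(2L)$ finite and nonzero there, so $b$ extends smoothly, indeed analytically, in $s$ up to and including the endpoint $s=1$. The apparent blow-up $dr/ds\sim r^2/(2L)$ as $s\to1$ is exactly compensated by the classical structure, since the leading term $a_0(x,\theta)$ is $r$-independent and each radial derivative of the tail gains a full power of $1/r$; this beats the naive $S^0$ estimate, which would be too weak at the endpoint. Using the full strength of (\ref{eq:tempered}) to bound $|\pd_\theta^\alpha\pd_x^\beta a_j|\le Q_{\alpha,\beta}R^j$, the same composition argument controls all mixed derivatives, yielding $\sup_{x,\theta,s}|\pd_x^\beta\pd_\theta^l\pd_s^k b|<\infty$ for every order. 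For moderate frequencies $1\le r\le r_0$ the bound follows instead from $a\in S^m$ being genuinely $C^\infty$ in $\xi$ away from the origin, where polar coordinates are a smooth change of variables; since no integer frequency other than $\xi=0$ lies in $r<1$, and the $\xi=0$ mode is a single rank-one term included exactly, we may freely modify $b$ for $r<1$ so that it is $C^\infty$ on all of $s\in[-1,1]$.

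With this uniform smoothness in hand, the counting is standard approximation theory applied in each direction. A $C^\infty$ periodic dependence on $x$ makes $\|b_\eta\|_{L^\infty_\xi}$ decay faster than any power of $|\eta|$, so truncating to $|\eta|\le N_x$ costs $\sum_{|\eta|>N_x}\|b_\eta\|_{L^\infty_\xi}\le\eps/3$ with $N_x\le C_n\eps^{-1/n}$ for every $n>0$; the same super-algebraic law governs the angular Fourier truncation (from smoothness in $\theta$) and, via smoothness on $[-1,1]$, the Chebyshev truncation in $s$ (the radial direction alone would give geometric convergence, but the merely $C^\infty$ dependence in $x$ and $\theta$ caps the overall rate at this super-algebraic law). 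A triangle inequality over the three truncation stages, fed into $\sum_\eta\sup_\xi|(b-\tilde b)_\eta(\xi)|$, gives total error $\le\eps$, and the three per-direction counts multiply; since each obeys $C_{n'}\eps^{-1/n'}$ for all $n'$, choosing $n'=3n$ shows $M\le C_n\eps^{-1/n}$ for every $n>0$, as claimed.

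The main obstacle is the second paragraph: converting the merely asymptotic polyhomogeneous series into a quantitative, $j$-uniform analyticity statement for the \emph{true} symbol $b$ in the Chebyshev variable $s$, uniformly in $x$ and $\theta$. The delicate points are (i) controlling the classical remainder $a-\sum_{j<J}a_j|\xi|^{m-j}$ so that the convergent series actually represents $b$ up to a smooth, uniformly small tail---which is precisely what (\ref{eq:tempered}) is designed to supply---and (ii) the behavior at the endpoint $s=1$, where the compensation between the singular Jacobian $dr/ds$ and the classical decay of radial derivatives must be tracked to all orders to secure smoothness up to the boundary of the interpolation interval. Once this uniform smoothness is in place, the remaining steps are routine Fourier and Chebyshev estimates.
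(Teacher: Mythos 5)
Your proposal is correct and follows essentially the same route as the paper: reduce the $H^m\to L^2$ bound to a sup-norm estimate on the order-normalized symbol via the $\ell^1$-in-$\lambda$ multiplier bound (the content of Lemma \ref{teo:L2bdd}), use the tempered growth (\ref{eq:tempered}) to upgrade the polyhomogeneous series to a genuinely convergent one so that the symbol becomes $C^\infty$ (indeed analytic) in the Chebyshev variable up to the endpoint $s=1$, and then apply super-algebraic spectral convergence in each tensor direction. Your composition-of-analytic-maps argument in $u=1/r$ is the same computation the paper carries out explicitly via the binomial re-expansion in $z=1-s$, and the delicate points you flag (the smoothing remainder and the endpoint behavior) are exactly the ones the paper addresses with the cutoff choice $\eps_j=1/(2R)$.
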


\begin{theorem}\label{teo:HS} \emph{(Hierarchical spline approximants)}.
  Assume that $a \in S^m$, and that it is properly supported. Denote
  by $\tilde{a}$ the expansion of $a$ in hierarchical splines for
  $\xi$ (introduced in Section \ref{sec:HS}), and in a Fourier series
  for $x$, properly truncated to $M$ terms. Call $\tilde{A}$ and $A$
  the corresponding pseudodifferential operators on $L^2([0,1]^2)$
  defined by (\ref{eq:psido-sum}). Introduce $P_N$ the orthogonal
  projector onto frequencies obeying
  \[
  \max( |\xi_1|, |\xi_2|) \leq N.
  \]
  Then there exists a choice of $M$ obeying
  \[
  M \leq C \cdot \eps^{-2/(p+1)} \cdot \log N,
  \]
  where $p$ is the order of the spline interpolant, and for some $C > 0$, such that
  \[
  \| ( \tilde{A} - A)  P_N \|_{H^m([0,1]^2) \to L^2([0,1]^2)} \leq \epsilon.
  \]
\end{theorem}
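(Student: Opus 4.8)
The plan is to pass from the operator norm to a pointwise estimate on the symbol approximation error, and then to control that error scale by scale in frequency. First I would reduce $\|(\tilde A - A)P_N\|_{H^m\to L^2}$ to an $L^2\to L^2$ bound on the operator whose symbol is the order-zero error $s(x,\xi) := (\tilde a(x,\xi)-a(x,\xi))\la\xi\ra^{-m}$. Since in the quantization (\ref{eq:psido-sum}) multiplying the symbol by $\la\xi\ra^m$ is exactly precomposition with the Fourier multiplier $\la D\ra^m$, one has $(\tilde A - A)P_N f = s(x,D)\,\la D\ra^m P_N f$ identically, and $\|\la D\ra^m P_N f\|_{L^2}\le\|f\|_{H^m}$. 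Thus it suffices to show $\|s(x,D)P_N\|_{L^2\to L^2}\le\eps$. For this I would use the elementary bound, valid for any symbol,
\[
\|s(x,D)P_N\|_{L^2\to L^2}\le\sum_\lambda\ \sup_{\xi\in\Omega_N}\,|\hat s_\lambda(\xi)|,\qquad \Omega_N=\{\xi:\max(|\xi_1|,|\xi_2|)\le N\},
\]
where $\hat s_\lambda(\xi)$ is the $\lambda$-th $x$-Fourier coefficient of $s(\cdot,\xi)$: writing $s(x,D)=\sum_\lambda e^{2\pi i\lambda\cdot x}\,\hat s_\lambda(D)$ exhibits it as a sum of Fourier multipliers $\hat s_\lambda(D)$, each of norm $\sup_\xi|\hat s_\lambda(\xi)|$, followed by the isometric modulations $e^{2\pi i\lambda\cdot x}$, and one applies the triangle inequality.

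The crucial structural point is that the right-hand side is a supremum, not a sum, over $\xi$, so errors committed on different dyadic frequency annuli do not accumulate — only the number of annuli, $O(\log N)$, will enter. Since $b:=a\la\xi\ra^{-m}\in S^0$, differentiating (\ref{eq:type10}) in $x$ and taking Fourier coefficients gives, for all $\alpha,\beta$,
\[
|\pd^\alpha_\xi\hat b_\lambda(\xi)|\le C_{\alpha\beta}\,|\lambda|^{-|\beta|}\,\la\xi\ra^{-|\alpha|},
\]
so $\hat b_\lambda$ decays faster than any power of $|\lambda|$ uniformly in $\xi$, and at frequency $|\xi|\sim 2^j$ its $(p+1)$-st $\xi$-derivative has size $O(2^{-j(p+1)})$. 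On the annulus $|\xi|\sim 2^j$ the hierarchical grid of Section \ref{sec:HS} places control points with spacing $\sim 2^j\eps^{1/(p+1)}$ (geometrically scarcer as $|\xi|\to\infty$); the order-$p$ spline estimate then yields local error $\lesssim(\text{spacing}/2^j)^{p+1}\sim\eps$, at a cost of $\sim\eps^{-2/(p+1)}$ control points per annulus, the exponent $2$ being the dimension of $\xi$. Summing over the $j=0,\dots,O(\log N)$ annuli meeting $\Omega_N$ gives a frequency-node count $\sim\eps^{-2/(p+1)}\log N$; truncating the $x$-Fourier series to the modes exceeding the target accuracy — a number controlled by the faster-than-polynomial decay above, with proper support legitimizing the series — makes the residual $x$-tail negligible. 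Collecting these choices produces an $M$ of the claimed size.

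The main obstacle is the multiscale spline estimate itself: one must convert the frequency-graded smoothness $|\pd^\alpha_\xi b|\lesssim\la\xi\ra^{-|\alpha|}$ into an honest interpolation bound on a grid whose spacing grows like $2^j$, verifying that the order-$p$ spline reproduces polynomials locally and that its error is controlled by $\sup|\pd^{p+1}_\xi b|$ times $(\text{spacing})^{p+1}$ on each annulus, uniformly across scales and across the retained $x$-modes. The secondary difficulty is the bookkeeping that couples the $x$- and $\xi$-discretizations, so that the total number of retained coefficients $a_{\lambda,\mu}$ — and not merely the number of $\xi$-nodes — obeys the stated bound; here it is the rapid decay of $\hat b_\lambda$ in $\lambda$ that prevents the $x$-contribution from degrading the exponent $2/(p+1)$.
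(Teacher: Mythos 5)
Your proposal follows essentially the same route as the paper: the reduction to the order-zero symbol, the bound $\|s(x,D)\|_{L^2\to L^2}\le\sum_\lambda\sup_\xi|\hat s_\lambda(\xi)|$ via modulated Fourier multipliers (which is exactly the paper's Lemma \ref{teo:L2bdd} and its appendix proof), the per-block spline error estimate in which the $(p+1)$-st derivative decay $\la\xi\ra^{-p-1}$ from (\ref{eq:type10}) cancels the geometric growth of the grid spacing, and the count of $O(\eps^{-2/(p+1)})$ nodes per annulus times $O(\log N)$ annuli. The argument is correct and matches the paper's proof in all essentials, including the reliance on rapid decay of $\hat b_\lambda$ in $\lambda$ to keep the $x$-truncation from degrading the bound.
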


The important point of these theorems is that $M$ is either constant
in $N$ (Theorem \ref{teo:Cheb}), or grows like $\log N$ (Theorem
\ref{teo:HS}), where $N$ is the bandlimit of the functions to which
the operator is applied.


\subsection{Symbol operations}

At the level of kernels, composition of operators is a simple
matrix-matrix multiplication. This property is lost when considering
symbols, but composition remains simple enough that the gains in
dealing with small interaction matrices $a_{\lambda,\mu}$ as in
(\ref{eq:sep0}) are far from being offset.

The \emph{twisted product} of two symbols $a$ and $b$, is the symbol
of their composition. It is defined as $(a \sharp b)(x,D) = a(x,D)
b(x,D)$ and obeys
\[
a \sharp b (x,\xi) = \int \int e^{-2 \pi i (x-y) \cdot (\xi - \eta)} a(x,\eta) b(y,\xi) \, dy d\eta.
\]
This formula holds for $\xi, \eta \in \R^d$, but in the case when
frequency space is discrete, the integral in $\eta$ is to be replaced by a sum.
In Section \ref{sec:dsc-op} we explain how to evaluate this
formula very efficiently using the symbol expansions discussed
earlier.

Textbooks on pseudodifferential calculus also describe asymptotic
expansions of $a \sharp b$ where negative powers of $|\xi|$ are
matched at infinity \cite{Hor, Fol, Sog}, but, as alluded to previously,
we are not interested in making simplifications of this kind.

Composition can be regarded as a building block for performing many
other operations using iterative methods. Functions of operators can
be computed by substituting the twisted product for the matrix-matrix
product in any algorithm that computes the corresponding function of a
matrix. For instance,
\begin{itemize}
\item The inverse of a positive-definite operator can be obtained via
  a Neumann iteration, or via a Schulz iteration;
\item There exist many choices of iterations for computing the square
  root and the inverse square root of a matrix \cite{Hig}, such as the
  Schulz-Higham iteration;
\item The exponential of a matrix can be obtained by the
  scaling-and-squaring method; etc.
\end{itemize}
These examples are discussed in detail in Section \ref{sec:dsc-op}. 

Two other operations that resemble composition from the algorithmic
viewpoint, are 1) transposition, and 2) the Moyal transform for
passing to the Weyl symbol.  They are also discussed below.

Lastly, this work would be incomplete without a routine for applying a
pseudodifferential operator to a function, from the knowledge of its
symbol. The type of separated expansion considered in equation
(\ref{eq:sep0}) suggests a very simple algorithm for this task,
detailed in Section \ref{sec:dsc-op}. (This part is not original;
it was already considered in previous work by Emmanuel Cand\`{e}s and
the authors in \cite{FastFIO}, where the more general case of Fourier
integral operators was considered.)

\subsection{Applications}

Applications of discrete symbol calculus abound in the numerical
solutions of linear partial differential equations (PDE) with variable coefficients. We outline several examples in this section and their numerical results are given in Section
\ref{sec:app}.

In all of these applications, our solution takes two steps. First, we use discrete symbol calculus to construct the symbol of the
operator which solves the PDE problem. Since the
data has not been queried yet (i.e., the right hand side, the initial conditions, or the
boundary conditions), the computational cost of this step is
mostly independent of the size of the data. Once the operator is ready in its
symbol form, we apply the operator to the data in the second step.

The two regimes in which this approach could be preferred is when
either 1) the complexity of the medium is low compared to the
complexity of the data, or 2) the problem needs to be solved several
times and benefits from being ``preconditioned'' in some way.

A first, toy application of discrete symbol calculus is to the numerical
solution of the simplest elliptic PDE,
\begin{equation}
  Au := (I-\div(\alpha(x)\grad) u = f
  \label{eqn:ellp}
\end{equation}
with $\alpha(x) > 0$, and periodic boundary conditions on a square. If
$\alpha(x)$ is a constant function, the solution requires only two
Fourier transforms, since the operator is diagonalized by the Fourier
basis. For variable $\alpha(x)$, discrete symbol calculus can be seen
as a natural generalization of this fragile Fourier diagonalization
property: we construct the symbol of $A^{-1}$ directly, and once the
symbol of $A^{-1}$ is ready, applying it to the function $f$ requires
only a small number of Fourier transforms.


The second application of discrete symbol calculus is related to
the Helmholtz equation
\begin{equation}
  L u := \left( -\Delta - \frac{\omega^2}{c^2(x)} \right) u = f(x)
  \label{eq:helm}
\end{equation}
where the sound speed $c(x)$ is a smooth function in $x$, in a
periodized square. The numerical solution of this problem is difficult
since the operator $L$ is not positive definite so that efficient
techniques such as multigrid cannot be used directly for this problem.
A standard iterative algorithm, such as MINRES or BIGGSTAB, can easily
take tens of thousands of iterations to converge.  One way to obtain
faster convergence is to solve a preconditioned system
\begin{equation}
  M^{-1} L u = M^{-1} f
  \label{eq:helm-p}
\end{equation}
with 
\[
M := -\Delta + \frac{\omega^2}{c^2(x)}
\quad\mbox{or}\quad
M := -\Delta + (1+i) \frac{\omega^2}{c^2(x)}
\]
Now at each iteration of the preconditioned system,
we need to invert a linear system for the preconditioner $M$.
Multigrid is typically used for this \cite{Erl}, but discrete symbol
calculus offers a way to directly precompute the symbol of $M^{-1}$.
Once it is ready, applying $M^{-1}$ to a function at each iteration is
reduced to a small number of Fourier transforms---three or four when
$c(x)$ is very smooth---which we anticipate to be very competitive vs.
a multigrid method.

Another important application of the discrete symbol calculus is to
``polarizing'' the initial condition of a linear hyperbolic system.
Let us consider the following variable coefficient wave equation on
the periodic domain $ x \in [0,1]^2$,
\begin{equation}
  \begin{cases}
    u_{tt} - \div (\alpha(x) \grad u) = 0\\
    u(0,x) = u_0(x) \\
    u_t(0,x) = u_1(x)
  \end{cases}
  \label{eq:wave}
\end{equation}
with the extra condition $\int u_1(x) d x = 0$. The operator $L:= -
\div (\alpha(x) \grad)$ is symmetric positive definite, and let us define
$P$ to be its square root $L^{1/2}$. We can then use $P$ to factorize the
wave equation as
\[
(\pd_t + i P) (\pd_t - i P) u = 0.
\]
As a result, the solution $u(t,x)$ can be represented as
\[
u(t,x) = e^{i t P} u_+(x) + e^{-i t P} u_-(x)
\]
where the polarized components $u_+(x)$ and $u_-(x)$ of the initial
condition are given by
\[
u_+ = \frac{u_0 + (iP)^{-1} u_1}{2}
\quad\mbox{and}\quad
u_- = \frac{u_0 - (iP)^{-1} u_1}{2}.
\]
To compute $u_+$ and $u_-$, we first use discrete symbol calculus to
construct the symbol of $P^{-1}$. Once the symbol of $P^{-1}$ is
ready, the computation of $u_+$ and $u_-$ requires only applying
$P^{-1}$ to the initial condition. Applying $e^{itP}$ is a more
difficult problem that we do not address in this paper.

Finally, discrete symbol calculus has a natural application to the
problem of depth extrapolation, or migration, of seismic data. In the
Helmholtz equation
\[
\Delta_\bot + \frac{\pd^2 u}{\pd z^2} + \frac{\omega^2}{c^2(x,z)} u = F(x,z,k),
\]
we can separate the Laplacian as $\Delta = \Delta_{\bot} + \frac{\pd^2}{\pd z^2}$, and factor the equation as
\begin{equation}\label{eq:ssr1}
\left( \frac{\pd}{\pd z} - B(z) \right) v = F(x,z,k) - \frac{\pd B}{\pd z}(z)u, \qquad \left( \frac{\pd}{\pd z} + B(z) \right) u = v
\end{equation}
where $B = \sqrt{- \Delta_\bot - \omega^2/c^2(x,z)}$ is called the
one-way wave propagator, or single square root (SSR) propagator. We
may then focus on the equation for $v$, called the SSR equation, and
solve it for decreasing $z$ from $z=0$. The term $\frac{\pd B}{\pd
  z}(z)u$ above is sometimes neglected, as we do in the sequel, on the
basis that it introduces no new singularities.

The symbol of $B^2$ is not elliptic; its zero level set presents a
well-known issue with this type of formulation. In Section
\ref{sec:app}, we introduce an adequate ``directional'' cutoff
strategy to remove the singularities that would otherwise appear,
hence neglect turning rays and evanescent waves, and then use discrete
symbol calculus to compute a well-behaved operator square root. We
then show how to solve the SSR equation approximately using an
operator exponential of $B$, also realized via discrete symbol
calculus. Unlike traditional methods of seismic imaging (discussed in
Section \ref{sec:related} below), the \emph{only} simplification we
make here is the directional cutoff just mentioned.

\subsection{Harmonic analysis of symbols}

It is instructive to compare the symbol expansions of this paper with
another type of expansion thought to be efficient for smooth
differential and integral operators, namely wavelets.

Consider $x \in [0,1]$ for simplicity. The standard matrix of an
operator $A$ in a basis of wavelets $\psi_{j,k}(x) = 2^{j/2} \psi(2^j
x-n)$ of $L^2([0,1])$ is simply $\< \psi_{j,k}, A \psi_{j',k'} \>$.
Such wavelet matrices were first considered by Meyer in \cite{Mey},
and later by Beylkin, Coifman, and Rokhlin in \cite{BCR}, for the
purpose of obtaining sparse expansions of singular integral operators
in the Calder\'{o}n-Zygmund class. Their result is that either $O(N)$
or $O(N \log N)$ elements suffice to represent a $N$-by-$N$ matrix
accurately, in the $\ell_2$ sense, in a wavelet basis. This result is
not necessarily true in other bases such as Fourier series or local
cosines, and became the basis for much activity in some numerical
analysis circles in the 1990s.

In contrast, the expansions proposed in this paper assume a class of
operators with symbols in the $S^m$ class defined (\ref{eq:type10}),
but achieve accurate compression with $O(1)$ or $O(\log N)$ elements,
way sublinear in $N$. This stark difference is illustrated in Figure
\ref{fig:sec14}.

\begin{figure}[htb]
  \begin{center}
    \begin{tabular}{cc}
      \includegraphics[height=2.5in]{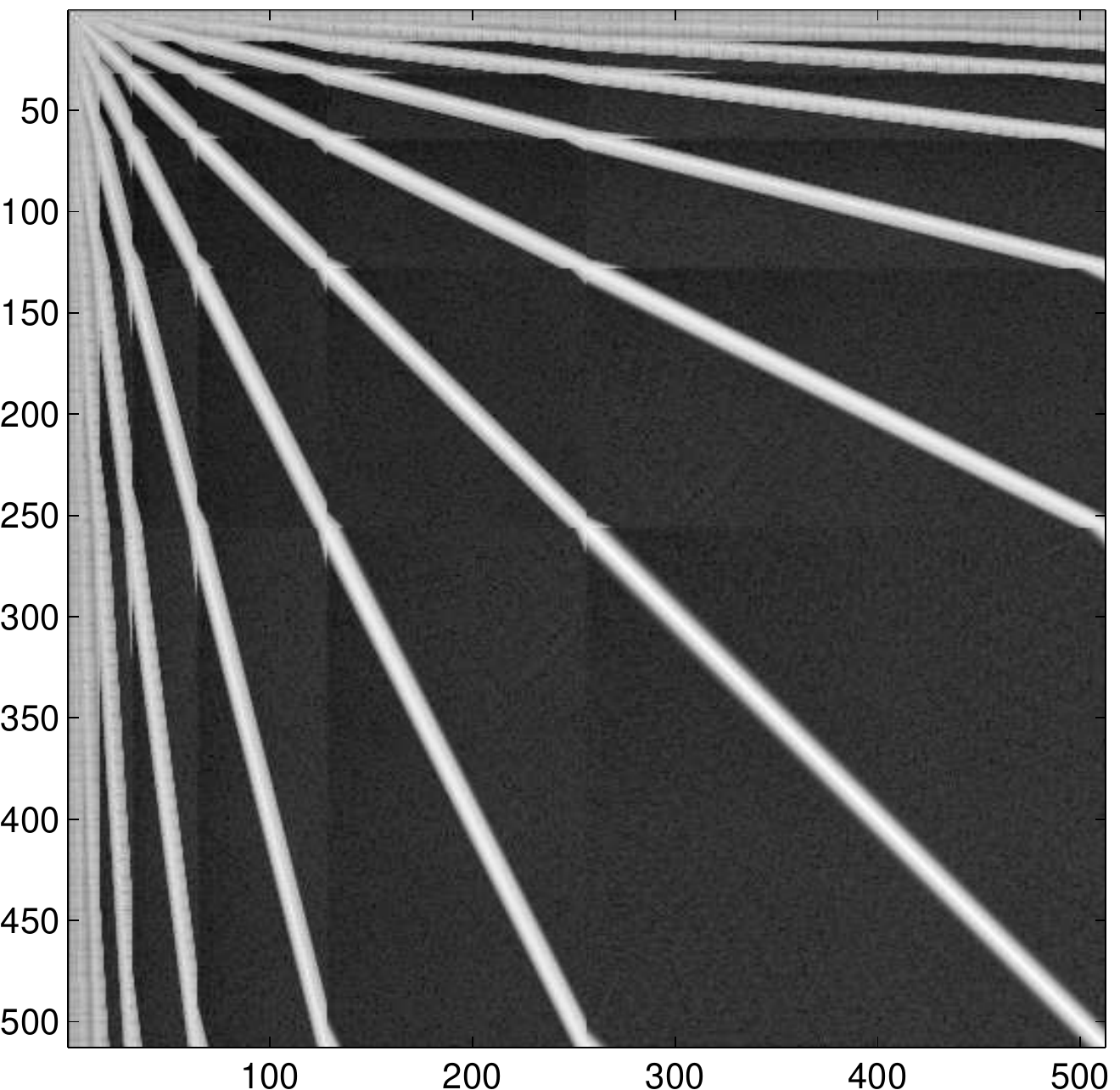} &  \includegraphics[height=0.38in]{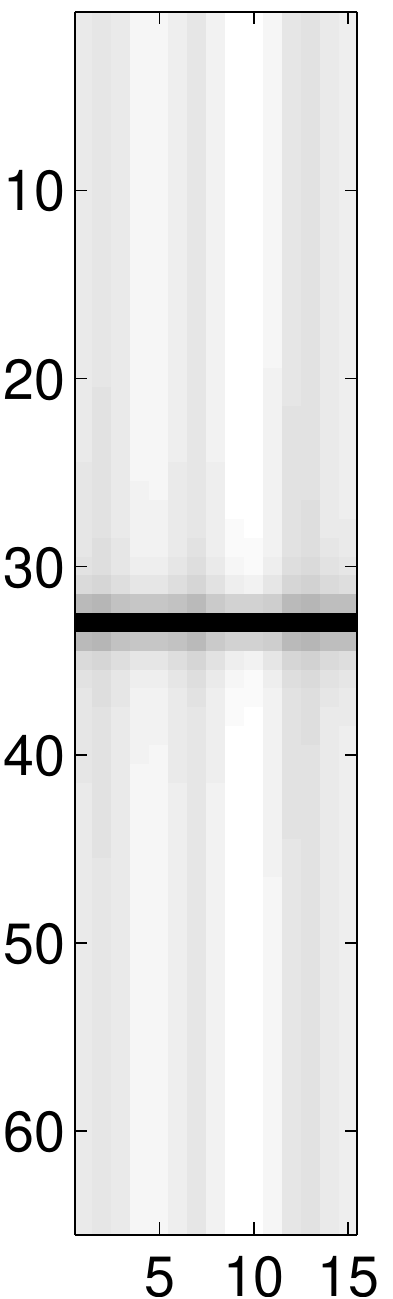}\\
      (a) & (b)
    \end{tabular}
  \end{center}
  \caption{Left: the standard $512$-by-$512$ wavelet matrix of the
    differential operator considered in Figure \ref{fig:sec11}, truncated
    to elements greater than $10^{-5}$ (white). Right: the
    $65$-by-$15$ interaction matrix of DSC, for the same operator and
    a comparable accuracy, using a hierarchical splines expansion in
    $\xi$. The scale is the same for both pictures. Notice that the
    DSC matrix can be further compressed by a singular value
    decomposition, and in this example has numerical rank equal to
    $3$, for a singular value cutoff at $10^{-5}$. For values of $N$
    greater than $512$, the wavelet matrix would increase in size in a
    manner directly proportional to $N$, while the DSC matrix would
    grow in size like $\log N$.}
  \label{fig:sec14}
\end{figure}

Tasks such as inversion and computing the square root are realized in
$O(\log^2 N)$ operations, still way sublinear in $N$. It is only when
the operator needs to be applied to functions defined on $N$ points,
as a ``post-computation'', that the complexity becomes $C \cdot N \log
N$. This constant $C$ is proportional to the numerical rank of the
symbol, and reflects the difficulty of storing it accurately, not the
difficulty of computing it. In practice, we have found that typical
values of $C$ are still much smaller than the constants that arise in
wavelet analysis, which are often plagued by a curse of dimensionality
\cite{Demanet-thesis}.

Wavelet matrices can sometimes be reduced in size to a mere $O(1)$
too, with controlled accuracy. To our knowledge this observation has
not been reported in the literature yet, and goes to show that some
care ought to be exercised before calling a method ``optimal''. The
particular smoothness properties of symbols that we leverage for their
expansion is also hidden in the wavelet matrix, as \emph{additional
  smoothness along the shifted diagonals.} The following result is
elementary and we give it without proof.

\begin{theorem}\label{teo:wavelet} Let $A \in \Psi^0$ as defined by
  (\ref{eq:type10}), for $x \in \R$ and $\xi \in \R$. Let $\psi_{j,k}$
  be an orthonormal wavelet basis of $L^2(\R)$ of class $C^\infty$,
  and with an infinite number of vanishing moments. Then for each $j$,
  and each $\Delta k = k - k'$, there exists a function $f_{j, \Delta
    k} \in C^\infty(\R)$ with smoothness constants independent of $j$,
  such that
\[
\< \psi_{j,k}, A \psi_{j,k'} \> = f_{j,\Delta k} (2^{-j} k).
\]
\end{theorem}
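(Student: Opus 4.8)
The plan is to compute the matrix element $\la \psi_{j,k}, A\psi_{j,k'}\ra$ explicitly from the symbol representation, and to extract, by a single rescaling change of variables, its dependence on the ``location'' $t = 2^{-j}k$ and on the shift $\Delta k = k-k'$. First I would use the dilation--translation rule for the Fourier transform to write $\widehat{\psi_{j,k'}}(\xi) = 2^{-j/2} e^{-2\pi i \xi k' 2^{-j}}\hat\psi(2^{-j}\xi)$, insert this into $A\psi_{j,k'}(x) = \int e^{2\pi i x\xi} a(x,\xi)\widehat{\psi_{j,k'}}(\xi)\,d\xi$, and pair the result against $\overline{\psi_{j,k}}$.

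Then I substitute $u = 2^j x - k$ and $\eta = 2^{-j}\xi$, for which the Jacobian satisfies $dx\,d\xi = du\,d\eta$; the normalizing prefactors $2^{j/2}\cdot 2^{-j/2}$ cancel, the two oscillatory factors combine into $e^{2\pi i u\eta}e^{2\pi i \eta \Delta k}$, and the spatial argument of the symbol becomes $2^{-j}(u+k) = 2^{-j}u + t$. This yields
\[
\la \psi_{j,k}, A\psi_{j,k'}\ra = \int\!\!\int \overline{\psi(u)}\,\hat\psi(\eta)\, e^{2\pi i u\eta}\, e^{2\pi i \eta\Delta k}\, a\!\left(2^{-j}u + t,\,2^j\eta\right)\, du\,d\eta,
\]
in which $k$ enters only through $t = 2^{-j}k$ and $k'$ only through $\Delta k$. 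I would therefore define $f_{j,\Delta k}(t)$ to be exactly the right-hand side, regarded as a function of the continuous variable $t\in\R$; by construction $\la\psi_{j,k},A\psi_{j,k'}\ra = f_{j,\Delta k}(2^{-j}k)$, which is the claimed identity.

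It remains to establish $f_{j,\Delta k}\in C^\infty(\R)$ with $j$-independent bounds. Since $t$ appears only in the first slot of $a$, differentiating $\ell$ times under the integral replaces $a$ by $\pd_x^\ell a$, and the order-zero estimate (\ref{eq:type10}) with $m=0$ gives $|\pd_x^\ell a(x,\xi)| \le C_{0,\ell}\la\xi\ra^{0} = C_{0,\ell}$ uniformly in $(x,\xi)$, hence uniformly in $j$ and $t$. Because $\psi$ is $C^\infty$, both $\psi$ and $\hat\psi$ decay rapidly and lie in $L^1(\R)$, so the double integral is absolutely convergent and
\[
\left| f_{j,\Delta k}^{(\ell)}(t)\right| \le C_{0,\ell}\,\|\psi\|_{L^1}\,\|\hat\psi\|_{L^1},
\]
a bound independent of $j$, $t$, and $\Delta k$. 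The same domination justifies both Fubini and differentiation under the integral sign.

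The computation itself is routine; the only points that require genuine care are the two analytic justifications just invoked---that the symbol representation of $A\psi_{j,k'}$ is valid for the Schwartz function $\psi_{j,k'}$, and that one may differentiate under the integral uniformly in $j$---both of which follow from the rapid decay supplied by the smoothness hypothesis on $\psi$. I would remark that the hypothesis of infinitely many vanishing moments is not actually needed for this weak conclusion: it forces $\hat\psi$ to vanish to infinite order at the origin, which is what one would exploit to obtain decay of $f_{j,\Delta k}$ in $\Delta k$ (i.e.\ the genuine off-diagonal compression), whereas the present statement uses only the order-zero estimate together with the integrability of $\psi$ and $\hat\psi$.
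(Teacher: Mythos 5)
Your computation is correct, and in fact the paper states this theorem explicitly \emph{without} proof (``The following result is elementary and we give it without proof''), so there is nothing to compare against; your rescaling argument $u = 2^j x - k$, $\eta = 2^{-j}\xi$, which isolates the dependence on $t = 2^{-j}k$ and $\Delta k$ and then bounds $\pd_t^\ell$ via $|\pd_x^\ell a| \leq C_{0,\ell}$, is surely the argument the authors had in mind. The only caveat worth recording is that membership in $C^\infty$ alone does not force $\psi$ and $\hat\psi$ to decay rapidly or lie in $L^1$; you are implicitly invoking the standard convention that a ``$C^\infty$ wavelet with infinitely many vanishing moments'' (e.g.\ the Meyer wavelet) has rapid decay together with all its derivatives, and that hypothesis should be stated since it is what justifies the absolute convergence, Fubini, and differentiation under the integral sign. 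Your closing remark that the vanishing moments are not needed for this particular conclusion, but only for off-diagonal decay in $\Delta k$, is accurate and consistent with the role the theorem plays in the paper's discussion of wavelet matrices.
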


We would like to mention that similar ideas of smoothness along the
diagonal have appeared in the context of seismic imaging, for the
diagonal fitting of the so-called normal operator in a curvelet frame
\cite{HerrmannStolk, FDCT}. In addition, the construction of
second-generation bandlets for image processing is based on a similar
phenomenon of smoothness along edges for the unitary recombination of
MRA wavelet coefficients \cite{Peyre}. We believe that this last
``alpertization'' step could be of great interest in numerical
analysis.

Theorem \ref{teo:wavelet} hinges on the assumption of symbols in
$S^m$, which is not met in the more general context of
Calder\'{o}n-Zygmund operators (CZO) considered by Meyer, Beylkin,
Coifman, and Rokhlin. The class of CZO has been likened to a
limited-smoothness equivalent to symbols of type $(1,1)$ and order 0,
i.e., symbols that obey
\[
| \pd_\xi^\alpha \pd_x^\beta a(x,\xi)| \leq C_{\alpha,\beta} \< \xi \>^{-|\alpha| + |\beta|}.
\]
Symbols of type $(1,0)$ and order 0 obeying (\ref{eq:type10}) are a
special case of this. Wavelet matrices of operators in the $(1,1)$
class are almost diagonal\footnote{Their standard wavelet matrix has
  at most $O(j)$ large elements per row and column at scale $j$---or
  frequency $O(2^j)$---after which the matrix elements decay
  sufficiently fast below a preset threshold. $L^2$ boundedness would
  follow if there were $O(1)$ large elements per row and column, but
  $O(j)$ does not suffice for that, which testifies to the fact that
  operators of type $(1,1)$ are not in general $L^2$ bounded. The
  reason for this $O(j)$ number is that an operator with a $(1,1)$
  symbol does not preserve vanishing moments of a wavelet---not even
  approximately. Such operators may turn an oscillatory wavelet at any
  scale $j$ into a non-oscillating bump, which then requires wavelets
  at all the coarser scales for its expansion.}, but there is no
smoothness along the shifted diagonals as in Theorem
\ref{teo:wavelet}. So while the result in \cite{BCR} is sharp, namely
no much else than wavelet sparsity can be expected for CZO, we may
question whether the generality of the CZO class is truly needed for
applications to partial differential equations. The authors are
unaware of a PDE setup which requires the introduction of symbols in
the $(1,1)$ class that would not also belong to the $(1,0)$ class.

%
%







\subsection{Related work}\label{sec:related}

The idea of writing pseudodifferential symbols in separated form to
formulate various one-way approximations to the variable-coefficients
Helmholtz equation has long been a tradition in seismic imaging. This
almost invariably involves a high-frequency approximation of some
kind. Some influential work includes the phase screen method by Fisk
and McCartor \cite{Fisk}, and the generalized screen expansion of Le
Rousseau and de Hoop \cite{LeRdeH}. This last reference discusses fast
application of pseudodifferential operators in separated form using
the FFT, and it is likely not the only reference to make this simple
observation. A modern treatment of leading-order pseudodifferential
approximations to one-way wave equations is in \cite{Stolk2}.

Expansions of principal symbols $a_0(x, \xi/|\xi|)$ (homogeneous of
degree 0 is $\xi$) in spherical harmonics in $\xi$ is a useful tool in
the theory of pseudodifferential operators \cite{Tay}, and has also
been used for fast computations by Bao and Symes in \cite{Bao}. For computation of
pseudodifferential operators, see also the work by Lamoureux,
Margrave, and Gibson \cite{LMG}.

In the numerical analysis community, separation of operator kernels
and other high-dimensional functions is becoming an important topic.
Beylkin and Mohlenkamp proposed an alternated least-squares algorithm
for computing separated expansions of tensors in \cite{BeyMoh, BM}, propose to compute functions of operators in this representation, and apply these ideas to solving the multiparticle Schr\"{o}dinger equation
in \cite{BeyMohPer}, with Perez.

A different, competing approach to compressing operators is the
``partitioned separated'' method that consists in isolating
off-diagonal squares of the kernel $K(x,y)$, and approximating each of
them by a low-rank matrix. This also calls for an adapted notion of
calculus, e.g., for composing and inverting operators. The first
reference to this algorithmic framework is probably the partitioned
SVD method described in \cite{PSVD}. More recently, these ideas have
been extensively developed under the name H-matrix, for hierarchical
matrix; see \cite{borm1,hackbusch1} and http://www.hlib.org.

Separation ideas, with an adapted notion of operator calculus, have
also been suggested for solving the wave equation; two examples are
\cite{BeySan} and \cite{watuwe}.

Exact operator square-roots---up to numerical errors---have in some
contexts already been considered in the literature. See \cite{Fis}
for an example of Helmholtz operator with a quadratic profile, and
\cite{Lin} for a spectral approach that leverages sparsity, also
for the Helmholtz operator.

\section{Discrete Symbol Calculus: Representations}
\label{sec:dsc-rep}

The two central questions of discrete symbol calculus are:
\begin{itemize}
\item Given an operator $A$, how to represent its symbol $a(x,\xi)$ efficiently?
\item How to perform the basic operations of the pseudodifferential
  symbol calculus based on this representation? These operations
  include sum, product, adjoint, inversion, square root, inverse
  square root, and, in some cases, the exponential.
\end{itemize}

These two questions are mostly disjoint; we answer the first question
in this section, and the second question in Section \ref{sec:dsc-op}.



Let us write expansions of the form (\ref{eq:sep0}). Since
$e_\lambda(x) = e^{2 \pi i x \cdot \lambda}$ with $x \in [0,1]^2$, we
denote the $x$-Fourier series coefficients of $a(x,\xi)$ as
\[
\hat{a}_\lambda(\xi) = \int_{[0,1]^2} e^{- 2 \pi i x \cdot \lambda} a(x,\xi) \, dx, \qquad \lambda \in \Z^2.
\]
We find it convenient to write $h_{a,\lambda}(\xi) =\hat{a}_\lambda(\xi) \la\xi\ra^{-d_a}$, hence
\begin{equation}\label{eq:sep}
  a(x,\xi) = \sum_{\lambda} e_\lambda(x) h_{a,\lambda}(\xi) \la\xi\ra^{d_a}.
\end{equation}

In the case when $a(\cdot,\xi)$ is bandlimited with band $B_x$, i.e.,
$\hat{a}_\lambda(\xi)$ is supported inside the square $(-B_x,B_x)^2$
in the $\lambda$-frequency domain, then the integral can be computed
exactly by a uniform quadrature on the points $x_p = p/(2B_x)$, with
$0 \leq p_1,p_2 < 2B_x$. This grid is called $X$ in the sequel.

The problem is now reduced to finding adequate expansions
$\tilde{h}_{a,\lambda}$ for $h_{a,\lambda}$, either valid in the whole
plane $\xi \in \R^2$, or in a large square $\xi \in [-N,N]^2$.


\subsection{Rational Chebyshev interpolant}\label{sec:Cheb}

For symbols in the class (\ref{eq:classical}), each function
$h_{a,\lambda}(\xi) = \hat{a}_\lambda(\xi) \la\xi\ra^{-d_a}$ is smooth
in angle $\mbox{arg } \xi$, and polyhomogeneous in radius $|\xi|$.
This means that $h_{a,\lambda}$ is for $|\xi|$ large a polynomial of
$1/|\xi|$ along each radial line through the origin, and is otherwise
smooth (except possibly near the origin).

One idea for efficiently expanding such functions is to map the half
line $|\xi| \in [0,\infty)$ to the interval $[-1,1]$ by a rational
function, and expand the result in Chebyshev polynomials. Put $\xi =
(\theta, r)$, and $\mu = (m,n)$. Let
\[
g_\mu(\xi) =  e^{i m \theta} TL_{n}(r),
\]
where $TL_n$ are the rational Chebyshev functions
\cite{boyd-2001-cfsm}, defined from the Chebyshev polynomials of the
first kind $T_n$ as
\[
TL_n(r) = T_n(A^{-1}_L(r)),
\]
by means of the algebraic map
\[
s \mapsto r = A_L(s) = L \frac{1+s}{1-s}, \qquad r \mapsto s = A^{-1}_L(r) = \frac{r-L}{r+L}.
\]
The parameter $L$ is typically on the order of 1. The proposed
expansion then takes the form
\[
h_{a,\lambda}(\xi) = \sum_\mu a_{\lambda,\mu}  g_\mu(\xi),
\]
or $\tilde{h}_{a,\lambda}(\xi)$ if the sum is truncated, where
\begin{equation*}
a_{\lambda,\mu} = \frac{1}{2 \pi} \int_{-1}^1 \int_0^{2 \pi} h_{a,\lambda}((\theta,A_L(s))) e^{-i m \theta} T_n(s)
\, \frac{d\theta ds}{\sqrt{1-s^2}}.
\end{equation*}

For properly bandlimited functions, such integrals can be evaluated
exactly using the right quadrature points: uniform in $\theta \in [0,2
\pi]$, and Gauss points in $s$. The corresponding points in $r$
are the image of the Gauss points under the algebraic map. The
resulting grid in the $\xi$ plane can be described as follows. Let $q
= (q_\theta, q_r)$ be a couple of integers such that $0 \leq q_\theta
< N_\theta$ and $0 \leq q_r < N_r$; we have in polar coordinates
\[
\xi_{q} = \left( 2 \pi \frac{q_\theta}{N_\theta} , - \cos \left( \frac{2(A_L(q_r) - 1)}{2N_r} \right) \right).
\]
We call this grid $\{ \xi_q \} = \Omega$. Passing from the values $h_{a,\lambda}(\xi_q)$ to $a_{\lambda,\mu}$
and vice-versa can be done using the fast Fourier transform. Of course, $\tilde{h}_{a,\lambda}(\xi)$ is nothing but an interpolant of $h_{a,\lambda}(\xi)$ at the points $\xi_q$.

\bigskip

In the remainder of this section, we present the proof of Theorem
\ref{teo:Cheb}, which contains the convergence rates of the truncated
sums over $\lambda$ and $\mu$. The argument hinges on the following
$L^2$ boundedness result, which is a simple modification of standard
results in $\R^d$, see \cite{Ste}. It is not necessary to restrict
$d=2$ for this lemma.

\begin{lemma}\label{teo:L2bdd}
Let $a(x,\xi) \in C^{d'}([0,1]^d, \ell_\infty(\Z^d))$, where $d' = d+1$ if $d$ is odd, or $d+2$ if $d$ is even. Then the operator $A$ defined by (\ref{eq:psido-sum}) extends to a bounded operator  on $L^2([0,1]^d)$, with
\[
\| A \|_{L^2} \leq C \cdot \| (1 + (- \Delta_x)^{d'/2}) a(x,\xi) \|_{L^\infty([0,1]^d, \ell_\infty(\Z^d))}.
\]
\end{lemma}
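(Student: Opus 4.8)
The plan is to reduce the operator to a sum of elementary pieces by expanding the symbol in its $x$-Fourier series, and then bound each piece exactly. Writing $a(x,\xi) = \sum_{\lambda\in\Z^d} \hat a_\lambda(\xi)\, e^{2\pi i x\cdot\lambda}$ and substituting into (\ref{eq:psido-sum}) gives, after interchanging the two sums,
\[
A = \sum_{\lambda \in \Z^d} E_\lambda M_\lambda, \qquad E_\lambda f(x) = e^{2\pi i x\cdot\lambda} f(x), \quad \widehat{M_\lambda f}(\xi) = \hat a_\lambda(\xi)\,\hat f(\xi),
\]
where $E_\lambda$ is modulation and $M_\lambda$ is the Fourier multiplier on the torus with symbol $\hat a_\lambda$. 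I would first carry out this rearrangement on trigonometric polynomials $f$, where all sums are finite and the interchange is trivial, and only justify the passage to general $f \in L^2$ at the very end, once the norm bound is in hand.

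The two elementary facts I would then invoke are that $E_\lambda$ is unitary on $L^2([0,1]^d)$ (multiplication by a unimodular function), and that, by Plancherel on the torus, $\|M_\lambda\|_{L^2\to L^2} = \sup_{\xi\in\Z^d}|\hat a_\lambda(\xi)| = \|\hat a_\lambda\|_{\ell_\infty(\Z^d)}$. Hence $\|E_\lambda M_\lambda\|_{L^2\to L^2} = \|\hat a_\lambda\|_{\ell_\infty(\Z^d)}$, and the triangle inequality yields
\[
\|A\|_{L^2\to L^2} \le \sum_{\lambda\in\Z^d}\|\hat a_\lambda\|_{\ell_\infty(\Z^d)}.
\]
This crude bound, rather than an almost-orthogonality (Cotlar--Stein) argument, will suffice precisely because the right-hand side turns out to be summable.

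It remains to extract decay in $\lambda$ from the $x$-smoothness. Set $g(x,\xi) = (1+(-\Delta_x)^{d'/2})a(x,\xi)$; since $d'$ is even this is a genuine differential operator of order $d'$, and $a\in C^{d'}$ guarantees $g$ is well defined with finite $\|g\|_{L^\infty([0,1]^d,\ell_\infty(\Z^d))}$. Because $(-\Delta_x)^{d'/2}$ acts on $e^{2\pi i x\cdot\lambda}$ by the factor $(2\pi|\lambda|)^{d'}$, the $x$-Fourier coefficients satisfy $\hat a_\lambda(\xi) = \hat g_\lambda(\xi)/(1+(2\pi|\lambda|)^{d'})$, and $|\hat g_\lambda(\xi)|\le \int_{[0,1]^d}|g(x,\xi)|\,dx \le \|g\|_{L^\infty(\ell_\infty)}$. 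Summing over $\lambda$,
\[
\|A\|_{L^2\to L^2} \le \|g\|_{L^\infty(\ell_\infty)} \sum_{\lambda\in\Z^d}\frac{1}{1+(2\pi|\lambda|)^{d'}} =: C\,\|g\|_{L^\infty(\ell_\infty)},
\]
which is the asserted estimate.

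The one place requiring care --- and which dictates the exact value of $d'$ --- is the convergence of this lattice sum: $\sum_{\lambda}(1+(2\pi|\lambda|)^{d'})^{-1}$ is finite precisely when $d' > d$. Choosing $d'$ to be the smallest even integer exceeding $d$ (namely $d+1$ for odd $d$, $d+2$ for even $d$) achieves two things at once: it keeps $d'>d$ so the sum converges, and it keeps $d'$ even so that $(-\Delta_x)^{d'/2}$ remains a \emph{local} differential operator matching the $C^{d'}$ hypothesis, rather than a nonlocal fractional Laplacian. I do not expect any genuine obstacle beyond this bookkeeping; the only technical loose end is the density argument promoting $A=\sum_\lambda E_\lambda M_\lambda$ from trigonometric polynomials to all of $L^2$, which is immediate once the bound above shows the operator series converges in norm.
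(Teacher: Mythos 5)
Your proposal is correct and follows essentially the same route as the paper's own proof: decompose $A$ over the $x$-Fourier series of the symbol into modulations composed with Fourier multipliers, bound each piece by $\sup_\xi|\hat a_\lambda(\xi)|$ via Plancherel, extract the decay $(1+|2\pi\lambda|^{d'})^{-1}$ from the $C^{d'}$ smoothness, and sum using $d'>d$, finishing with a density argument. The only cosmetic difference is that you justify the interchange of sums on trigonometric polynomials whereas the paper assumes $\hat f\in\ell_1(\Z^d)$ and invokes Fubini; both are standard and equivalent here.
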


The proof of this lemma is in the Appendix.

\begin{proof}[Proof of Theorem \ref{teo:Cheb}.]


  Consider $s = A^{-1}_L(r) \in [-1,1)$ where $A_L$ and its inverse
  were defined above. Expanding $a(x,(\theta, r))$ in rational
  Chebyshev functions $TL_n(r)$ is equivalent to expanding $f(s)
  \equiv a(x,(\theta,A_L(s)))$ in Chebyshev polynomials $T_n(s)$.
  Obviously,
  \[
  f \circ A_L^{-1} \in C^\infty([0,\infty)) \qquad \Leftrightarrow \qquad f \in C^\infty([-1,1)).
  \]
  
  It is furthermore assumed that $a(x,\xi)$ is in the classical class
  with tempered growth of the polyhomogeneous components; this condition
  implies that the smoothness constants of $f(s) = a(x,(\theta,A_L(s)))$
  are uniform as $s \to 1$, i.e., for all $n \geq 0$,
  \[
  \exists \; C_n \; : \quad |f^{(n)}(s)| \leq C_n, \quad s \in [-1,1],
  \]
  or simply, $f \in C^\infty([-1,1])$. In order to see why that is the
  case, consider a cutoff function $\chi(r)$ equal to 1 for $r \geq
  2$, zero for $0 \leq r \leq 1$, and $C^\infty$ increasing in
  between. Traditionally, the meaning of $(\ref{eq:classical})$ is
  that there exists a sequence $\eps_j > 0$, defining cutoffs $\chi(r
  \eps_j)$ such that
  \[
  a(x,(r,\theta)) - \sum_{j \geq 0} a_j(x,\theta) r^{-j} \chi ( r \eps_j) \in S^{-m}_{\cl},
  \]
  for all $m \geq 0$. A remainder in $S^{-\infty}_{\cl} \equiv
  \bigcup_{m \geq 0} S^{-m}_{\cl}$ is called smoothing. As long as the
  choice of cutoffs ensures convergence, the determination of
  $a(x,\xi)$ modulo $S^{-\infty}$ does not depend on this choice.
  (Indeed, if there existed an order-$k$ discrepancy between the sums
  with $\chi(r \eps_j)$ or $\chi(r \delta_j)$, with $k$ finite, it
  would need to come from some of the terms $a_j r^{-j} ( \chi(r
  \eps_j) - \chi(r \delta_j) )$ for $j \leq k$. But each of these
  terms is of order $- \infty$.)

  Because of condition (\ref{eq:tempered}), it is easy to check that
  the particular choice $\eps_j = 1/(2R)$ suffices for convergence of
  the sum over $j$ to a symbol in $S^0$. As mentioned above, changing
  the $\eps_j$ only affects the smoothing remainder, so we may focus
  on $\eps_j = 1/(2R)$.

After changing variables, we get
\[
f(s) = a(x,(\theta,A_L(s))) = \sum_{j \geq 0} a_j(x,\theta) L^{-j} \left( \frac{1-s}{1+s} \right)^j  \chi \left( \frac{A_L(s)}{2R} \right) + r(s),
\]
where the smoothing remainder $r(s)$ obeys
\[
|r^{(n)}(s)| \leq C_{n,M} (1-s)^M, \qquad \forall M \geq 0,
\]
hence, in particular when $M = 0$, has uniform smoothness constants as
$s \to 1$. It suffices therefore to show that the sum over $j \geq 0$
can be rewritten as a Taylor expansion for $f(s) - r(s)$, convergent
in some neighborhood of $s = 1$.

Let $z = 1-s$. Without loss of generality, assume that $R \geq 2L$,
otherwise increase $R$ to $2L$. The cutoff factor $ \chi \left(
  \frac{A_L(1-z)}{2R} \right)$ equals $1$ as long as $0 \leq z \leq
\frac{L}{4R}$. In that range,
\[
f(1-z) - r(1-z) = \sum_{j \geq 0} a_j(x,\theta) L^{-j} \frac{z^j}{(2-z)^j}.
\]
By making use of the binomial expansion
\[
\frac{z^j}{(2-z)^j} = \sum_{m \geq 0} \left( \frac{z}{2} \right)^{j+m} \begin{pmatrix} j+m-1 \\ j-1 \end{pmatrix}, \qquad \mbox{if } j \geq 1,
\]
and the new index $k = j+ m$, we obtain the Taylor expansion about $z = 0$:
\[
f(1-z) - r(1-z) = a_0(x,\theta) + \sum_{k \geq 0} \left( \frac{z}{2} \right)^{k} \sum_{1 \leq j \leq k} \frac{a_j(x,\theta)}{L^j}  \begin{pmatrix} k-1 \\ j-1 \end{pmatrix}.
\]
To check convergence, notice that $\begin{pmatrix} k-1 \\ j-1
\end{pmatrix} \leq \sum_{n = 0}^{k-1} \begin{pmatrix} k-1 \\ n
\end{pmatrix} = 2^{k-1}$, combine this with (\ref{eq:tempered}), and
obtain
\begin{align*}
2^{-k} \sum_{1 \leq j \leq k} \frac{a_j(x,\theta)}{L^j}  \begin{pmatrix} k-1 \\ j-1 \end{pmatrix} &\leq \frac{Q_{00}}{2} \sum_{1 \leq j \leq k} \left( \frac{R}{L} \right)^{j} \\
&\leq \frac{Q_{00}}{2} \frac{1}{1 - L/R} \left( \frac{R}{L} \right)^k
\end{align*}
We assumed earlier that $z \in [0, L/(4R)]$: this condition manifestly
suffices for convergence of the sum over $k$.  This shows that $f \in
C^{\infty}([-1,1])$; the very same reasoning with $Q_{\alpha \beta}$
in place of $Q_{00}$ also shows that any derivative $\pd_x^\alpha
\pd_\theta^\beta f(s) \in C^\infty([-1,1])$.

The Chebyshev expansion of $f(s)$ is the Fourier-cosine series of
$f(\cos \phi)$, with $\phi \in [0,\pi]$; the previous reasoning shows
that $f(\cos \phi) \in C^\infty([0,\infty])$. The same is true for any
$(x,\theta)$ derivatives of $f(\cos \phi)$.

Hence $a(x,(A_L(\cos \phi),\theta))$ is a $C^\infty$ function,
periodic in all its variables. The proposed expansion scheme is
simply:
\begin{itemize}
\item A Fourier series in $x \in [0,1]^2$;
\item A Fourier series in $\theta \in [0,2 \pi]$;
\item A Fourier-cosine series in $\phi \in [0,\pi]$.
\end{itemize}
An approximant with at most $M$ terms can then be defined by keeping
$\lfloor M^{1/4} \rfloor$ Fourier coefficients per direction.  It is
well-known that Fourier and Fourier-cosine series of a $C^\infty$,
periodic function converge super-algebraically in the $L^\infty$ norm,
and that the same is true for any derivative of the function as well.
Therefore if $a_M$ is this $M$-term approximant, we have
\[
\sup_{x,\theta,\phi} | \pd_x^\beta (a-\tilde{a})(x,(A_L(\cos \phi),\theta))| \leq C_{\beta, M} \cdot M^{-\infty}, \qquad \forall \mbox{ multi-index } \beta
\]

We now invoke Lemma \ref{teo:L2bdd} with $a - a_M$ in place of $a$,
choose $M = O(\eps^{-1/\infty})$ with the right constants, and
conclude.

\end{proof}

It is interesting to observe what goes wrong when condition
(\ref{eq:tempered}) is not satisfied. For instance, if the growth of
the $a_j$ is fast enough in (\ref{eq:classical}), then it may be
possible to choose the cutoffs $\chi(\eps_j |\xi|)$ such that the sum
over $j$ replicates a fractional negative power of $|\xi|$, like
$|\xi|^{-1/2}$, and in such a way that the resulting symbol is still
in the class defined by (\ref{eq:type10}). A symbol with this kind of
decay at infinity would not be mapped onto a $C^\infty$ function of
$s$ inside $[-1,1]$ by the algebraic change of variables $A_L$, and
the Chebyshev expansion in $s$ would not converge spectrally. This
kind of pathology is generally avoided in the literature on
pseudodifferential operators by assuming that the order of the
compound symbol $a(x,\xi)$ is the same as that of the principal
symbol, i.e., the leading-order contribution $a_0(x, \mbox{arg }
\xi)$.

Finally, note that the obvious generalization of the complex
exponentials in arg $\xi$ to higher-dimensional settings would be
spherical harmonics, as advocated in \cite{Bao}. The radial expansion
scheme should probably remain unchanged, though.

\subsection{Hierarchical spline interpolant}\label{sec:HS}

An alternative representation is to use a hierarchical spline
construction in the $\xi$ plane. We define
$\tilde{h}_{a,\lambda}(\xi)$ to be an interpolant of
$h_{a,\lambda}(\xi) = \hat{a}_\lambda(\xi) \la\xi\ra^{-d_a}$ as
follows. We only define the interpolant in the square $\xi \in
[-N,N]^2$ for some large $N$. Pick a number $B_{\xi}$---independent of
$N$---that plays the role of coarse-scale bandwidth; in practice it is
taken comparable to $B_x$.

\begin{itemize}
\item Define $D_0 = (-B_\xi,B_\xi)^2$. For each $\xi \in D_0$,
  $\tilde{h}_{a,\lambda}(\xi) := h_{a,\lambda}(\xi)$.
\item For each $j=1,2,\cdots,L=\log_3(N/B_\xi)$, define $D_j = (-3^j B_\xi,
  3^j B_\xi)^2 - D_{j-1}$. We further partition $D_j$ into eight blocks:
  \[
  D_j = \bigcup_{i=1}^{8} D_{j,i},
  \]
  where each block $D_{j,i}$ is of size $2\cdot 3^{j-1}B_\xi \times
  2\cdot 3^{j-1}B_\xi$. Within each block $D_{j,i}$, we sample
  $h_{a,\lambda}(\xi)$ with a Cartesian grid $G_{j,i}$ of a fixed
  size.  The restriction of $\tilde{h}_{a,\lambda}(\xi)$ in $D_{j,i}$
  is defined to be the spline interpolant of $h_{a,\lambda}(\xi)$ on
  the grid $G_{j,i}$.
\end{itemize}

\begin{figure}[h!]
  \begin{center}
    \begin{tabular}{cc}
      \includegraphics[height=2.5in]{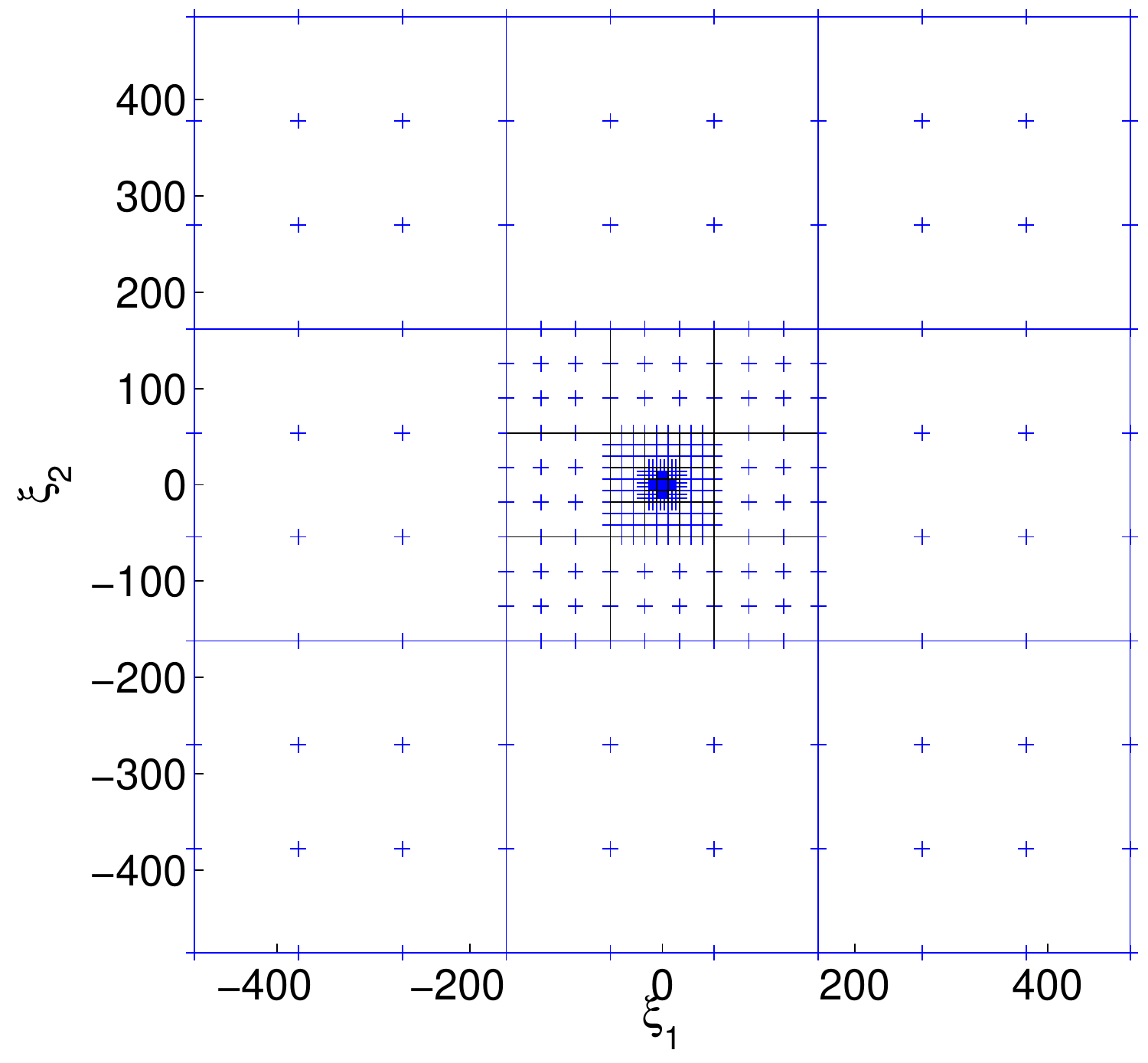} &
      \includegraphics[height=2.5in]{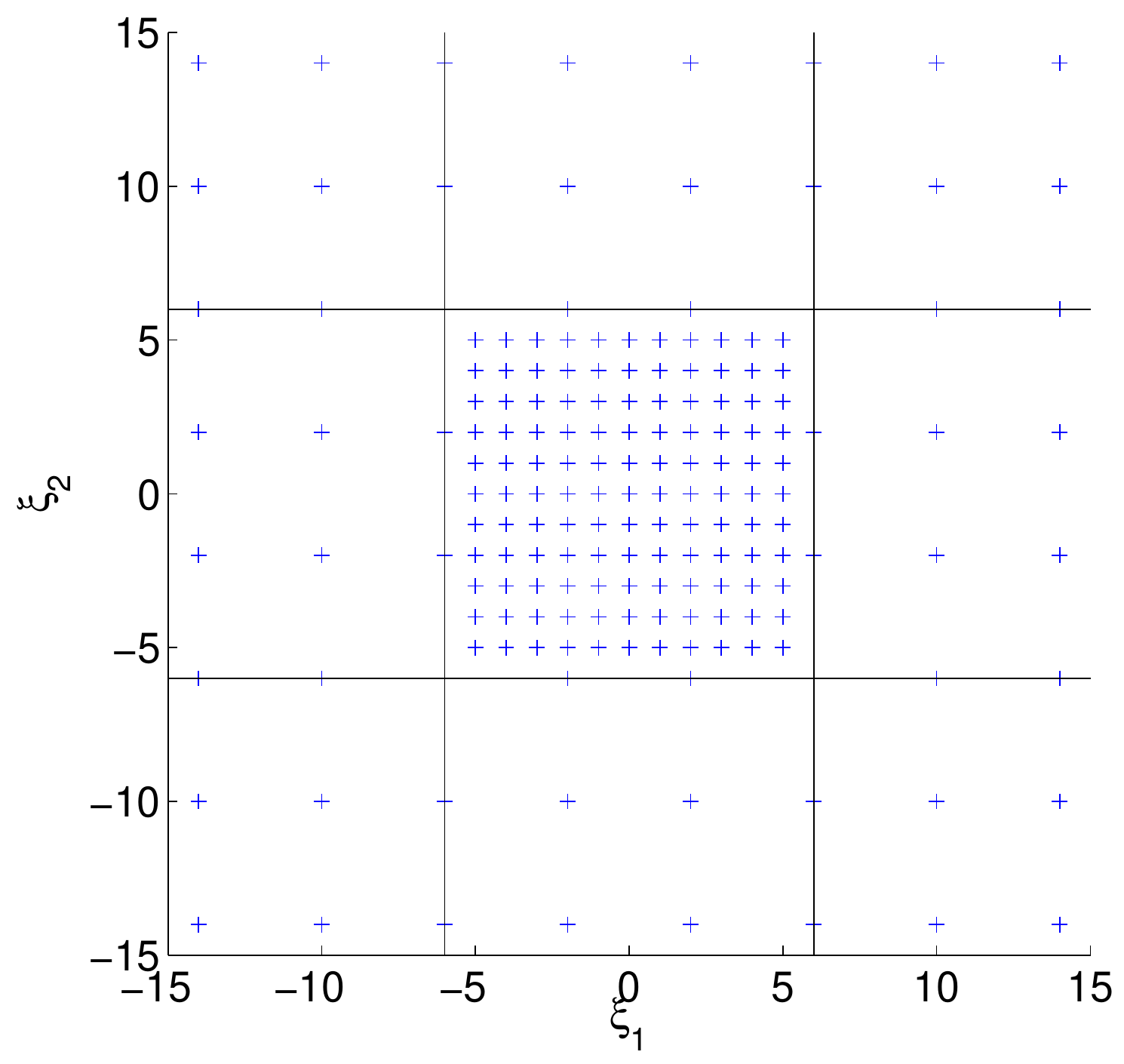}\\
      (a) & (b)
    \end{tabular}
  \end{center}
  \caption{ Hierarchical spline construction.  Here $B_\xi=6$, $L=4$, and
    $N=486$. The grid $G_{j,i}$ is of size $4 \times 4$.  The grid
    points are shown with ``+'' sign.  (a) The whole grid.  (b) The
    center of the grid.  }
  \label{fig:hsp}
\end{figure}

We emphasize that the number of samples used in each grid $G_{j,i}$ is
fixed independent of the level $j$. The reason for this is that the function
$h_{a,\lambda}(\xi)$ gains smoothness as $\xi$ grows to
infinity. In practice, we set $G_{j,i}$ to be a $4 \times 4$ or $5
\times 5$ Cartesian grid and use cubic spline interpolation.

Let us summarize the construction of the representation $a(x,\xi)
\approx \sum_\lambda e_\lambda(x) \tilde{h}_{a,\lambda}(\xi)
\la\xi\ra^{d_a}$. As before fix a parameter $B_x$ that governs the
bandwidth in $x$, and define
\[
X = \left\{ \left(\frac{p_1}{2B_x}, \frac{p_2}{2B_x}\right), 0 \le p_1,p_2< 2B_x \right\}
\quad\mbox{and}\quad
\Omega = D_0 \bigcup \left( \bigcup_{j,i} G_{j,i} \right).
\]
The construction of the expansion of $a(x,\xi)$ takes the following
steps
\begin{itemize}
\item Sample $a(x,\xi)$ for all pairs of $(x,\xi)$ with $x \in X$ and
  $\xi \in \Omega$.
\item For a fixed $\xi \in \Omega$, use the fast Fourier transform to
  compute $\hat{a}_\lambda (\xi) $ for all $\lambda\in (-B_x,B_x)^2$.
\item For each $\lambda$, construct the interpolant $\tilde{h}_{a,\lambda}(\xi)$ from the
  values of $h_{a,\lambda}(\xi) = \hat{a}_\lambda(\xi) \la\xi\ra^{-d_a}$ at $\xi \in \Omega$.
\end{itemize}

Let us study the complexity of this construction procedure. The number
of samples in $X$ is bounded by $4 B_x^2$, considered a constant with
respect to $N$. As we use a constant number of samples for each level
$j=1,2,\cdots,L=\log_3(N/B_\xi)$, the number of samples in $\Omega$ is
of order $O(\log N)$. Therefore, the total number of samples is still
of order $O(\log N)$. Similarly, since the construction of a fixed
size spline interpolant requires only a fixed number of steps, the
construction of the interpolants $\{ \tilde{h}_{a,\lambda}(\xi) \}$
takes only $O(\log N)$ steps as well. Finally, we would like to remark
that, due to the locality of the spline, the evaluation of
$\tilde{h}_{a,\lambda}(\xi)$ for any fixed $\lambda$ and $\xi$
requires only a constant number of steps.

\bigskip

We now expand on the convergence properties of the spline interpolant.

\begin{proof}[Proof of Theorem \ref{teo:HS}]

If the number of control points per square $D_{j,i}$ is $K^2$ instead
of 16 or 25 as we advocated above, the spline interpolant becomes
arbitrarily accurate. The spacing between two control points at level
$j$ is $O(3^{j}/K)$. With $p$ be the order of the spline scheme---we
took $p=3$ earlier---it is standard polynomial interpolation theory
that
\[
\sup_{\xi \in D_{j,i}} |\tilde{h}_{a,\lambda}(\xi) - h_{a,\lambda}(\xi)| \leq C_{a,\lambda,p} \cdot \left( \frac{3^j}{K} \right)^{p+1} \cdot \sup_{|\alpha| = p+1} \| \pd_\xi^\alpha h_{a,\lambda} \|_{L^\infty(D_{j,i})}.
\]
The symbol estimate (\ref{eq:type10}) guarantees that the last factor
is bounded by $C \cdot \sup_{\xi \in D_{j,i}} \la\xi\ra^{-p-1}$. Each
square $D_{j,i}$, for fixed $j$, is at a distance $O(3^j)$ from the
origin, hence $\sup_{\xi \in D_{j,i}} \la\xi\ra^{-p-1} =
O(3^{-j(p+1)})$. This results in
\[
\sup_{\xi \in D_{j,i}} |\tilde{h}_{a,\lambda}(\xi) - h_{a,\lambda}(\xi)| \leq C_{a,\lambda,p} \cdot K^{-p-1}.
\]
This estimate is uniform over $D_{j,i}$, hence also over $\xi \in
[-N,N]^2$. As argued earlier, it is achieved by using $O(K^2 \log N)$
spline control points. If we factor in the error of expanding the
symbol in the $x$ variable using $4 B^2$ spatial points, for a total
of $M = O(B^2 K^2 \log N)$ points, we have the compound estimate
\[
\sup_{x \in [0,1]^2} \sup_{\xi \in [-N,N]^2} | a(x,\xi) - \tilde{a}(x,\xi) | \leq C \cdot ( B^{-\infty} + K^{-p-1} ).
\]
The same estimate holds for the partial derivatives of $a - \tilde{a}$ in $x$. 

Functions to which the operator defined by $\tilde{a}(x,\xi)$ is
applied need to be bandlimited to $[-N,N]^2$, i.e., $\hat{f}(\xi) = 0$
for $\xi \notin [-N,N]^2$, or better yet $f = P_N f$. For those
functions, the symbol $\tilde{a}$ can be extended by $a$ outside of
$[-N,N]^2$, Lemma \ref{teo:L2bdd} can be applied to the difference $A
- \tilde{A}$, and we obtain
\[
\| (A - \tilde{A}) f \|_{L^2} \leq C \cdot ( B^{-\infty} + K^{-p-1} ) \cdot \| f \|_{L^2}
\]
The leading factors of $\| f \|_{L^2}$ in the right-hand side can be
made less than $\eps$ if we choose $B = O(\eps^{-1/\infty})$ and $K =
O(\eps^{-1/(p+1)})$, with adequate constants. The corresponding number
of points in $x$ and $\xi$ is therefore $M = O(\eps^{-2/(p+1)} \cdot
\log N)$.

\end{proof}

\section{Discrete Symbol Calculus: Operations}
\label{sec:dsc-op}

Let $A$ and $B$ be two operators with symbols $a(x,\xi)$ and
$b(x,\xi)$. Suppose that we have already generated their expansions
\[
a(x,\xi) \approx \sum_\lambda e_\lambda(x) \tilde{h}_{a,\lambda}(\xi) \la\xi\ra^{d_a}
\quad\mbox{and}\quad
b(x,\xi) \approx \sum_\lambda e_\lambda(x) \tilde{h}_{b,j}(\xi) \la\xi\ra^{d_b},
\]
where $d_a$ and $d_b$ are the orders of $a(x,\xi)$ and $b(x,\xi)$,
respectively. It is understood that the sum over $\lambda$ is
truncated, that $h_{a,\lambda}(\xi)$ are approximated with
$\tilde{h}_{a,\lambda}(\xi)$ by either method described earlier, and
that we will not keep track of which particular method is used in the notations.

Let us now consider the basic operations of the calculus of discrete
symbols.

\paragraph{Scaling} $C = \alpha A$. For the symbols, we have
$
c(x,\xi) = \alpha a(x,\xi).
$
In terms of the Fourier coefficients,
\[
\hat{c}_\lambda(\xi) = \alpha \hat{a}_\lambda(\xi) \approx \alpha
\tilde{h}_{a,\lambda}(\xi) \la\xi\ra^{d_a}.
\]
Then $d_c = d_a$ and
\[
\tilde{h}_{c,\lambda}(\xi) := \tilde{h}_{a,\lambda}(\xi).
\]

\paragraph{Sum} $C = A + B$. For the symbols, we have
$
c(x,\xi) = a(x,\xi) + b(x,\xi).
$
In terms of the Fourier coefficients,
\[
\hat{c}_\lambda(\xi) = \hat{a}_\lambda(\xi) + \hat{b}_\lambda(\xi) \approx
\tilde{h}_{a,\lambda}(\xi) \la\xi\ra^{d_a} + \tilde{h}_{b,\lambda}(\xi) \la\xi\ra^{d_b}.
\]
Then $d_c = \max(d_a,d_b)$ and $\tilde{h}_{c,\lambda}(\xi)$ is the interpolant that
takes the values
\[
\left( \tilde{h}_{a,\lambda}(\xi) \la\xi\ra^{d_a} + \tilde{h}_{b,\lambda}(\xi) \la\xi\ra^{d_b} \right) \la\xi\ra^{-d_c}
\]
at $\xi \in \Omega$, where $\Omega$ is either the Gaussian points grid, or the hierarchical spline grid defined earlier.

\paragraph{Product} $C = A B$. For the symbols, we have
\[
c(x,\xi) = a(x,\xi) \sharp b(x,\xi) = \sum_{\eta} \int e^{-2\pi i (x-y)(\xi-\eta)} a(x,\eta) b(y,\xi) d y.
\]
In terms of the Fourier coefficients,
\[
\hat{c}_\lambda(\xi) = \sum_{k+l=\lambda} \hat{a}_k(\xi+l) \hat{b}_l(\xi) \approx 
\sum_{k+l=\lambda} \tilde{h}_{a,k}(\xi+l) \la\xi+l\ra^{d_a} \tilde{h}_{b,l}(\xi) \la\xi\ra^{d_b}.
\]
Then $d_c = d_a + d_b$ and $\tilde{h}_{c,\lambda}(\xi)$ is the interpolant that
takes the values
\[
\left( \sum_{k+l=\lambda} \tilde{h}_{a,k}(\xi+l) \la\xi+l\ra^{d_a} \tilde{h}_{b,l}(\xi) \la\xi\ra^{d_b} \right) \la\xi\ra^{-d_c}
\]
at $\xi \in \Omega$.

\paragraph{Transpose} $C = A^*$. For the symbols, it is straightforward to derive the formula
\[
c(x,\xi) = \sum_{\eta} \int e^{-2\pi i (x-y)(\xi-\eta)} \overline{a(y,\eta)} d y.
\]
In terms of the Fourier coefficients,
\[
\hat{c}_\lambda(\xi) = \overline{\hat{a}_{-\lambda}(\xi+\lambda)} \approx
\overline{\tilde{h}_{a,-\lambda}(\xi+\lambda)} \la\xi+\lambda\ra^{d_a}.
\]
Then $d_c = d_a$ and $\tilde{h}_{c,\lambda}(\xi)$ is the interpolant that takes the
values
\[
\left( \overline{\tilde{h}_{a,-\lambda}(\xi+\lambda)} \la\xi+\lambda\ra^{d_a} \right) \la\xi\ra^{-d_c}
\]
at $\xi \in\Omega$.

\paragraph{Inverse} $C = A^{-1}$ where $A$ is symmetric positive
definite. We first pick a constant $\alpha$ such that $\alpha
|a(x,\xi)| \ll 1$ for $\xi \in (-N, N)^2$. Since the order of $a(x,\xi)$
is $d_a$, $\alpha \approx O(1/N^{d_a})$. In the following iteration,
we first invert $\alpha A$ and then scale the result by $\alpha$ to
get $C$.
\begin{itemize}
\item $X_0 = I$.
\item For $k=0,1,2,\ldots$, repeat $X_{k+1} = 2 X_k - X_k (\alpha A)
  X_k$ until convergence.
\item Set $C = \alpha X_k$.
\end{itemize}

This iteration is called the Schulz iteration, and is quoted in
\cite{BeyMoh}. It can be seen as a modified Newton iteration for
finding the nontrivial zero of $f(X) = XAX-X$, where the gradient of
$f$ is approximated by the identity.


As this algorithm only utilizes the addition and the product of the
operators, all of the computation can be carried out via discrete
symbol calculus.  Since $\alpha \approx O(1/N^{d_a})$, the smallest
eigenvalue of $\alpha A$ can be as small as $O(1/N^{d_a})$ where the
constant depends on the smallest eigenvalue of $A$. For a given
accuracy $\eps$, it is not difficult to show that this algorithm
converges after $O(\log N + \log(1/\eps))$ iterations.


\paragraph{Square root and inverse square root} Put $C = A^{1/2}$ and $D
= A^{-1/2}$ where $A$ is symmetric positive definite. Here, we again
choose a constant $\alpha$ such that $\alpha |a(x,\xi)| \ll 1$ for $\xi
\in (-N, N)^2$. This also implies that $\alpha \approx O(1/N^{d_a})$.
In the following iteration, we first use the Schulz-Higham iteration
to compute the square root and the inverse square root of
$\alpha A$ and then scale them appropriately to obtain $C$ and $D$.
\begin{itemize}
\item $Y_0 = \alpha A$ and $Z_0 = I$.
\item For $k=0,1,2,\ldots$, repeat $Y_{k+1} = \frac{1}{2} Y_k (3I-Z_k
  Y_k)$ and $Z_{k+1} = \frac{1}{2} (3I- Z_k Y_k) Z_k$ until
  convergence.
\item Set $C = \alpha^{-1/2} Y_k$ and $D = \alpha^{1/2} Z_k$.
\end{itemize}
We refer to \cite{Hig} for a detailed discussion of this iteration.


In a similar way to the iteration used for computing the inverse, the Schulz-Higham
iteration is similar to the iteration for computing the inverse in that 
it uses only additions and products. Therefore, all of
the computation can be performed via discrete
symbol calculus.  A similar analysis show that, for any fixed accuracy
$\eps$, the number of iterations required by the Schulz-Higham iteration is
of order $O(\log N + \log(1/\eps))$ as well.

\paragraph{Exponential} $C = e^{\alpha A}$. In general, the exponential of an 
elliptic pseudodifferential operator is not necessarily a
pseudodifferential operator itself. However, if the data is restricted
to $\xi \in (-N,N)^2$ and $\alpha= O(1/N^{d_a})$, the exponential
operator behaves almost like a pseudodifferential operator in this range of frequencies\footnote{Note that another case in which the exponential remains pseudodifferential is when
the spectrum of $A$ is real and negative, regardless of the size of $\alpha$.}. In Section \ref{sec:sdm}, we will give an example where such an exponential operator plays an important role. 

We construct $C$ using
the following ``scaling-and-squaring" steps \cite{MolVan}:
\begin{itemize}
\item Pick $\delta$ sufficient small so that $\alpha/\delta = 2^K$ for an integer $K$.
\item Construct an approximation $Y_0$ for $e^{\delta A}$. One
  possible choice is the 4th order Taylor expansion: $Y_0 = I + \delta
  A + \frac{(\delta A)^2}{2!} + \frac{(\delta A)^3}{3!} +
  \frac{(\delta A)^4}{4!}$. Since $\delta$ is sufficient small, $Y_0$
  is quite accurate.
\item For $k=0,1,2,\ldots,K-1$, repeat $Y_{k+1} = Y_k Y_k$. 
\item Set $C = Y_K$.
\end{itemize}

This iteration for computing the exponential again uses only the
addition and product operations and, therefore, all the steps can be
carried out at the symbol level using discrete symbol calculus. The
number of steps $K$ is usually quite small, as the constant $\alpha$
itself is of order $O(1/N^{d_a})$.

\paragraph{Moyal transform} Pseudodifferential operators are sometimes defined by means of their \emph{Weyl symbol} $a_W$, as
\[
A f(x) = \sum_{\xi \in \Z^d} \int_{[0,1]^d]} a_W(\frac{1}{2}(x+y),\xi)
e^{2 \pi i (x-y) \xi} f(y) \, dy,
\]
when $\xi \in \Z^d$, otherwise if $\xi \in \R^d$, replace the sum over
$\xi$ by an integral. It is a more symmetric formulation that may be
preferred in some contexts. The other, usual formulation we have used
throughout this paper is called the Kohn-Nirenberg correspondence. The
relationship between the two methods of ``quantization'', i.e.,
passing from a symbol to an operator, is the so-called Moyal
transform. The book \cite{Fol} gives the recipe:
\[
a_W(x,\xi) = (M a)(x,\xi) = 2^n \sum_{\eta \in \Z^d} \int e^{4 \pi i (x-y) \cdot (\xi - \eta)} a(y,\eta) \, dy,
\]
and conversely
\[
a(x,\xi) = (M^{-1} a_W)(x,\xi) = 2^n \sum_{\eta \in \Z^d} \int e^{-4 \pi i (x-y) \cdot (\xi - \eta)} a(y,\eta) \, dy.
\]

These operations are algorithmically very similar to transposition. It
is interesting to notice that transposition is a mere conjugation in
the Weyl domain: $a^* = M^{-1} (\overline{M a})$. We also have the
curious property that
\[
\widehat{Ma}(p,q) = e^{-\pi i p q} \hat{a}(p,q)
\]
where the hat denotes Fourier transform in both variables.

\paragraph{Applying the operator} The last operation that we discuss
is how to apply the operator to a given input function. Suppose $u(x)$
is sampled on a grid $x = (p_1/N,p_2/N)$ with $0 \le p_1,p_2 < N$. Our
goal is to compute $(Au)(x)$ on the same grid. Using the definition of
the pseudo-differential symbol and the expansion of $a(x,\xi)$, we
have
\begin{eqnarray*}
  (Au)(x) &=& \sum_{\xi} e^{2\pi i x\xi} a(x,\xi) \hat{u}(\xi)\\
  &\approx& \sum_{\xi} e^{2\pi i x\xi} \sum_\lambda e_\lambda(x) \tilde{h}_{a,\lambda}(\xi) \la\xi\ra^{d_a} \hat{u}(\xi)\\
  &=& \sum_\lambda e_\lambda(x) \left( \sum_{\xi} e^{2\pi i x\xi} \left( \tilde{h}_{a,\lambda}(\xi) \la\xi\ra^{d_a} \hat{u}(\xi)\right) \right).
\end{eqnarray*}
Therefore, a straightforward yet efficient way to compute $Au$ is
\begin{itemize}
\item For each $\lambda \in (-B_x,B_x)^2 $, sample $\tilde{h}_{a,\lambda}(\xi)$ for $\xi \in [-N/2,N/2)^2$.
\item For each $\lambda \in (-B_x,B_x)^2 $, form the product $\tilde{h}_{a,\lambda}(\xi) \la\xi\ra^{d_a}
  \hat{u}(\xi)$ for $\xi \in [-N/2,N/2)^2$.
\item For each $\lambda \in (-B_x,B_x)^2 $, apply the fast Fourier transform to the result of
  the previous step.
\item For each $\lambda \in (-B_x,B_x)^2 $, multiply the result of the previous step with
  $e_\lambda(x)$. Finally, their sum gives $(Au)(x)$.
\end{itemize}

Let us estimate the complexity of this procedure. For each fixed $\lambda$,
the number of operations is dominated by the complexity of the fast
Fourier transform, which is $O(N\log N)$. Since there is only a
constant number of values for $\lambda \in (-B_x,B_x)^2 $, the overall
complexity is also $O(N\log N)$.

In many cases, we need to calculate $(Au)(x)$ for many different
functions $u(x)$. Though the above procedure is quite efficient, we
can further reduce the number of the Fourier transforms required.  The
idea is to exploit the possible redundancy between the functions
$\tilde{h}_{a,\lambda}(\xi)$ for different $\lambda$. We first use a rank-reduction
procedure, such as QR factorization or singular value decomposition
(SVD), to obtain a low-rank approximation
\[
\tilde{h}_{a,\lambda}(\xi) \approx \sum_{t=1}^T u_{\lambda t} v_t(\xi)
\]
where the number of terms $T$ is often much smaller than the possible
values of $j$. We can then write
\begin{eqnarray*}
  (Au)(x) &\approx& \sum_j e_\lambda(x) \sum_{\xi} e^{2\pi i x\xi} \sum_{t=1}^T u_{\lambda t} v_t(\xi) \la\xi\ra^{d_a} \hat{u}(\xi)\\
  &=& \sum_{t=1}^T \left(\sum_j e_\lambda(x) u_{\lambda t}\right)  \left( \sum_\xi e^{2\pi i x\xi} v_t(\xi) \la\xi\ra^{d_a} \hat{u}(\xi) \right).
\end{eqnarray*}
The new version of applying $(Au)(x)$ then takes two steps. In the
preprocessing step, we compute
\begin{itemize}
\item For each $\lambda \in (-B_x,B_x)^2 $, sample $\tilde{h}_{a,\lambda}(\xi)$ for $\xi \in [-N/2,N/2)^2$.
\item Construct the factorization $\tilde{h}_{a,\lambda}(\xi) \approx \sum_{t=1}^T u_{\lambda t} v_t(\xi)$.
\item For each $t$, compute the function $\sum_\lambda e_\lambda(x) u_{\lambda t}$.
\end{itemize}
In the evaluation step, one carries out the following steps for an input function $u(x)$
\begin{itemize}
\item For each $t$, compute $v_t(\xi) \la\xi\ra^{d_a} \hat{u}(\xi)$.
\item For each $t$, perform fast Fourier transform to the result of the previous step.
\item For each $t$, multiply the result with $\sum_\lambda e_\lambda(x) u_{\lambda t}$.
  Their sum gives $(Au)(x)$.
\end{itemize}

\section{Applications and Numerical Results}
\label{sec:app}

In this section, we provide several numerical examples to demonstrate
the effectiveness of the discrete symbol calculus. In these numerical
experiments, we use the hierarchical spline version of the discrete
symbol calculus. Our implementation is written in Matlab and all the
computational results are obtained on a desktop computer with 2.8GHz
CPU.

\subsection{Basic operations}

We first study the performance of the basic operations described in
Section \ref{sec:dsc-op}. In the following tests, we set $R = 6$,
$L=6$, and $N=R\times 3^L = 4374$. The number of samples in $\Omega$
is equal to 677. We consider the elliptic operator
\[
A u := (I - \div (\alpha(x) \grad)) u.
\]

\paragraph{Example 1.} The coefficient $\alpha(x)$ is a simple sinusoid
function given in Figure \ref{fig:op2d_1}. We use the discrete symbol
calculus to compute the operators $C = AA$, $C = A^{-1}$, and $ C=
A^{1/2}$. Table \ref{tbl:op2d_1} summarizes the running time, the
number of iterations, and the accuracy of these operations. We estimate
the accuracy using random noise as test functions. For a given test
function $f$, the errors are computed using the following quantities:
\begin{itemize}
\item For $C=A A$, we use $ \frac{\|Cf-A(Af)\|}{\|A(Af)\|}$.
\item For $C=A^{-1}$, we use $ \frac{\|A(Cf)-f\|}{\|f\|}$.
\item For $C=A^{1/2}$, we use $ \frac{\|C(Cf)-Af\|}{\|Af\|}$.
\end{itemize}

\begin{figure}[h!]
  \begin{center}
    \includegraphics[height=1.5in]{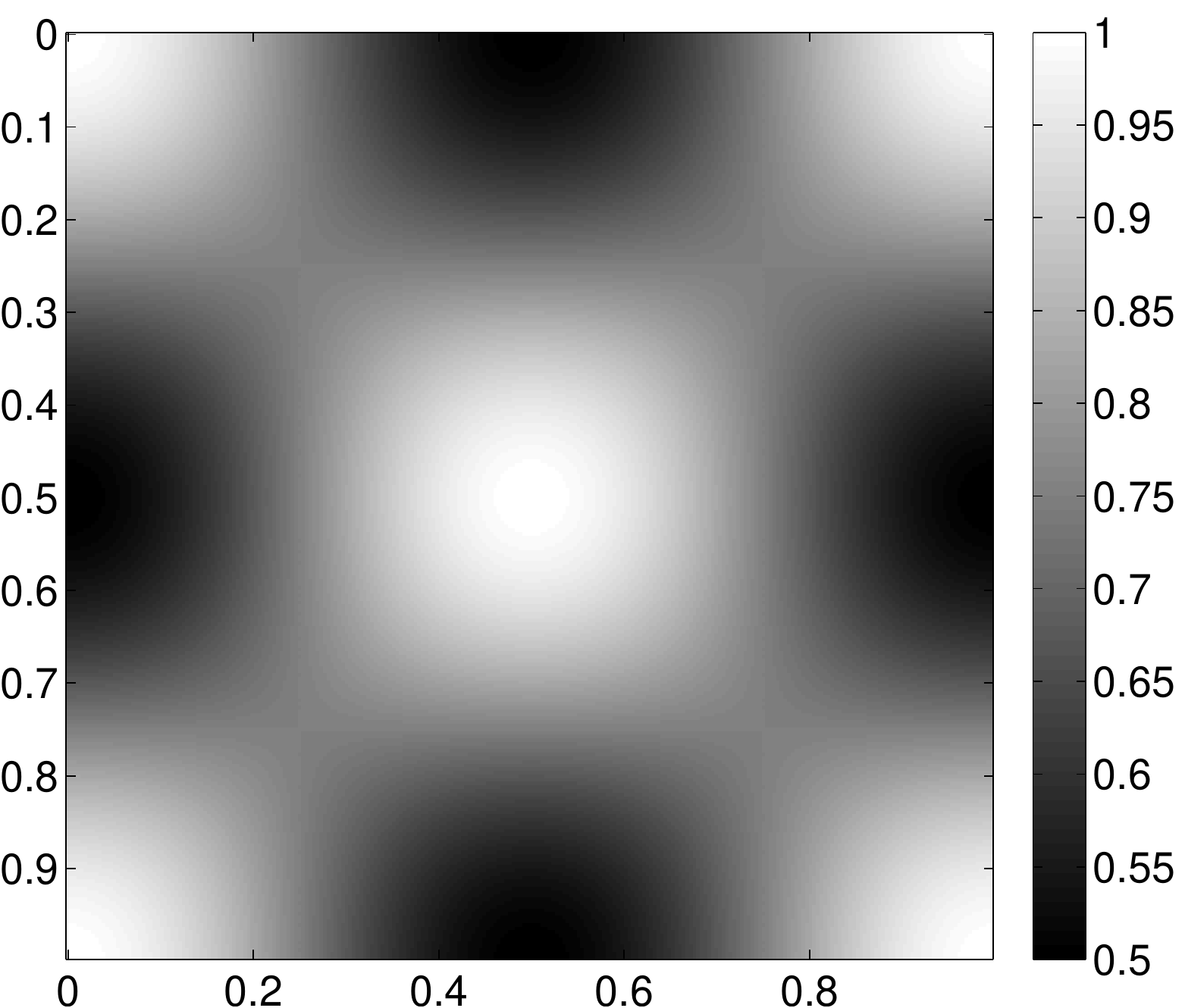}\\
  \end{center}
  \caption{The coefficient $\alpha(x)$ of Example 1.}
  \label{fig:op2d_1}
\end{figure}

\begin{table}[h!]
  \begin{center}
    \begin{tabular}{|c|ccc|}
      \hline
      & $\sharp$ iterations & Time & Accuracy\\
      \hline
      $C = AA$      &  - & 3.66e+00 & 1.92e-05\\
      $C = A^{-1}$  & 17 & 1.13e+02 & 2.34e-04\\
      $C = A^{1/2}$ & 27 & 4.96e+02 & 4.01e-05\\
      \hline
    \end{tabular}
  \end{center}
  \caption{The results of Example 1.}
  \label{tbl:op2d_1}
\end{table}

\paragraph{Example 2.} In this example, we set $\alpha(x)$ to be a random
bandlimited function (see Figure \ref{fig:op2d_2}). We report the
running time, the number of iterations, and the accuracy for each
operation in Table \ref{tbl:op2d_2}.

\begin{figure}[h!]
  \begin{center}
    \includegraphics[height=1.5in]{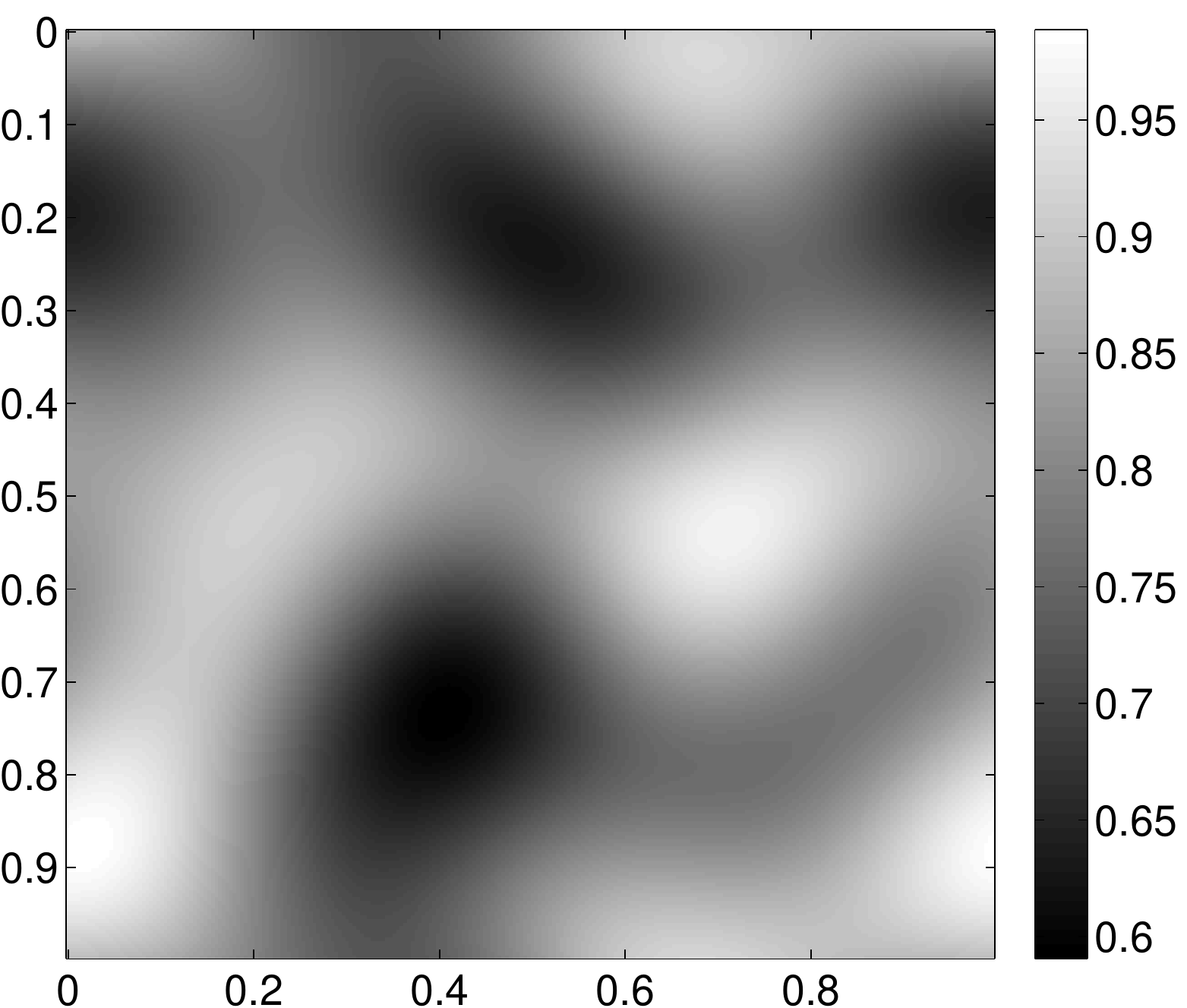}\\
  \end{center}
  \caption{The coefficient $\alpha(x)$ of Example 2.}
  \label{fig:op2d_2}
\end{figure}

\begin{table}[h!]
  \begin{center}
    \begin{tabular}{|c|ccc|}
      \hline
      & $\sharp$ iterations & Time & Accuracy\\
      \hline
      $C = AA$      &  - & 3.66e+00 & 1.73e-05\\
      $C = A^{-1}$  & 16 & 1.05e+02 & 6.54e-04\\
      $C = A^{1/2}$ & 27 & 4.96e+02 & 8.26e-05\\
      \hline
    \end{tabular}
  \end{center}
  \caption{The results of Example 2.}
  \label{tbl:op2d_2}
\end{table}

Tables \ref{tbl:op2d_1} and \ref{tbl:op2d_2} show that the number of
iterations for the inverse and square root operator remain almost
independent of the function $a(x,\xi)$. Our algorithms produce good
accuracy with a small number of sampling points in both $x$ and $\xi$.
Although we have $N=4374$ in these examples, we can increase the value
of $N$ easily either by adding several extra levels in the hierarchical
spline construction or by adding a couple more radial quadrature
points in the rational Chebyshev polynomial construction. For both of
these approaches, the running time and iteration count increase only
slightly: it is possible to show that they depend on $N$ in a logarithmic way.

\subsection{Preconditioner}

As we mentioned in the Introduction, an important application
of the discrete symbol calculus is to precondition the inhomogeneous
Helmholtz equation:
\[
L u := \left( -\Delta - \frac{\omega^2}{c^2(x)} \right) u = f
\]
where the sound speed $c(x)$ is smooth and periodic in $x$. We
consider the solution of the preconditioned system
\[
M^{-1} L u = M^{-1} f
\]
with the so-called complex-shifted Laplace preconditioner \cite{Erl}, of which we consider two variants,
\[
M_1 := -\Delta + \frac{\omega^2}{c^2(x)}
\quad\mbox{and}\quad
M_2 := -\Delta + (1+i)\cdot\frac{\omega^2}{c^2(x)}.
\]

For each preconditioner $M_j$ with $j=1,2$, we use the discrete symbol
calculus to compute the symbol of $M_j^{-1}$. In our case, applying $M_j^{-1}$
requires at most four fast Fourier transforms. Furthermore, since
$M_j^{-1}$ only serves as a preconditioner, we do not need to be very
accurate about applying $M_j^{-1}$. This allows us to further reduce
the number of terms in the expansion of the symbol of $M_j^{-1}$.

\paragraph{Example 3.} The sound speed $c(x)$ of this example is given
in Figure \ref{fig:hm2d_1}. We perform the test on different
combination of $\omega$ and $N$ with $\omega/N$ fixed. For both the
unconditioned and conditioned systems, we use BICGSTAB and set the
relative error to be $10^{-3}$.  The numerical results are given in
Table \ref{tbl:hm2d_1}. For each test, we report the number of
iterations and, in parenthesis, the running time, both for the unconditioned system and the
preconditioned system with $M_1$ and $M_2$.

\begin{figure}[h!]
  \begin{center}
    \includegraphics[height=1.5in]{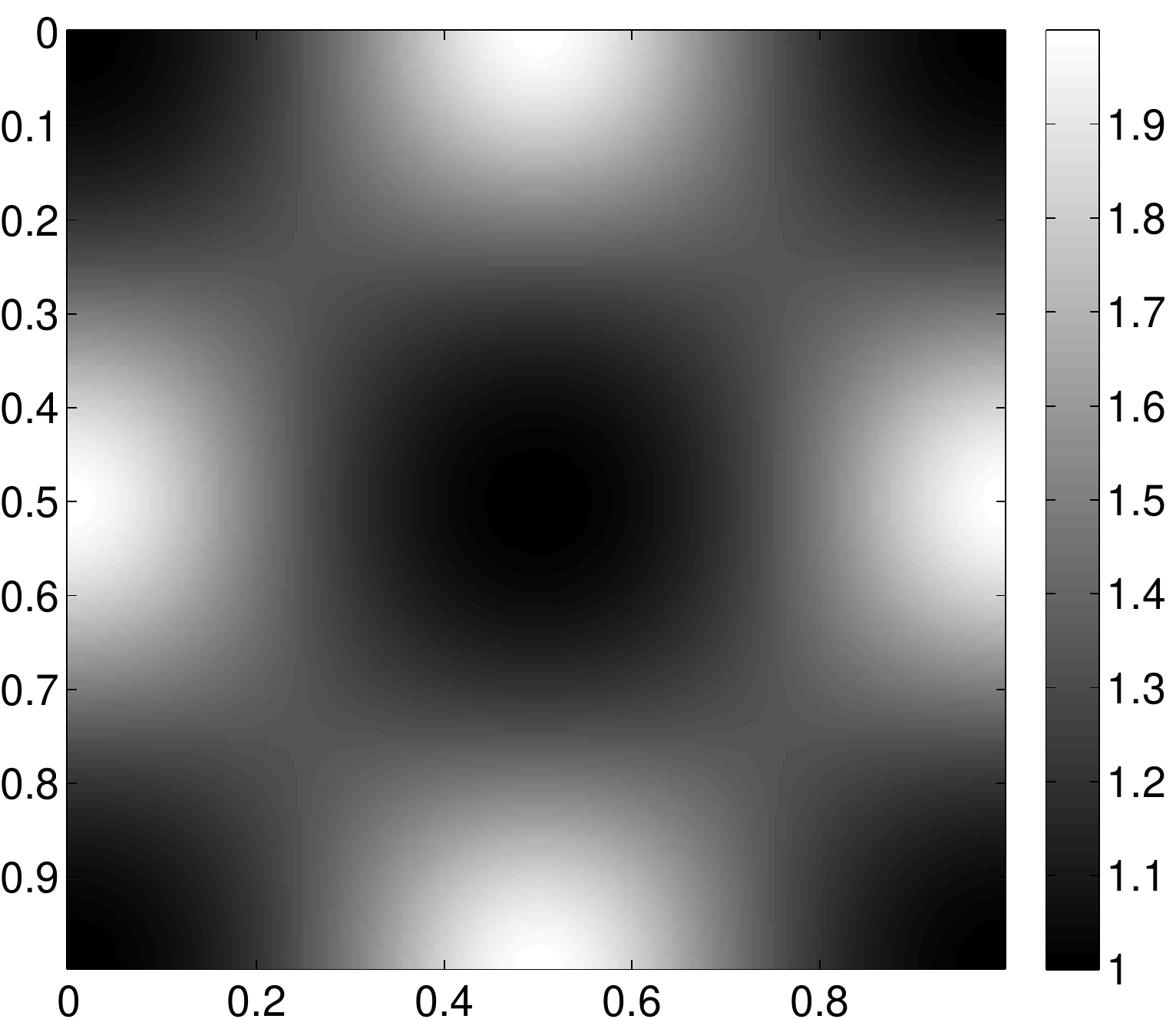}\\
  \end{center}
  \caption{The sound speed $c(x)$ of Example 3.}
  \label{fig:hm2d_1}
\end{figure}

\begin{table}[h!]
  \begin{center}
    \begin{tabular}{|c|ccc|}
      \hline
      $(w/2\pi,N)$  & Unconditioned & $M_1$ & $M_2$ \\
      \hline
      (4,64) 	& 2.24e+03 (8.40e+00)		&8.55e+01 (6.40e-01)	&5.70e+01 (5.10e-01)\\
      (8,128)	& 5.18e+03 (6.79e+01)		&1.50e+02 (4.16e+00)	&8.85e+01 (2.46e+00)\\
      (16,256) 	& 1.04e+04 (6.50e+02)		&4.98e+02 (6.79e+01)	&3.54e+02 (4.82e+01)\\
      (32,512)	&				&9.00e+02 (6.41e+02)	&3.06e+02 (2.20e+02)\\
      \hline
    \end{tabular}
  \end{center}
  \caption{
    The results of Example 3. For each test, we list the number of iterations and the 
    running time (in parenthesis).
  }
  \label{tbl:hm2d_1}
\end{table}

\paragraph{Example 4.} In this example, the sound speed $c(x)$ (shown
in Figure \ref{fig:hm2d_2}) is a Gaussian waveguide. We perform the
similar tests and the numerical results are summarized in Table
\ref{tbl:hm2d_2}.

\begin{figure}[h!]
  \begin{center}
    \includegraphics[height=1.5in]{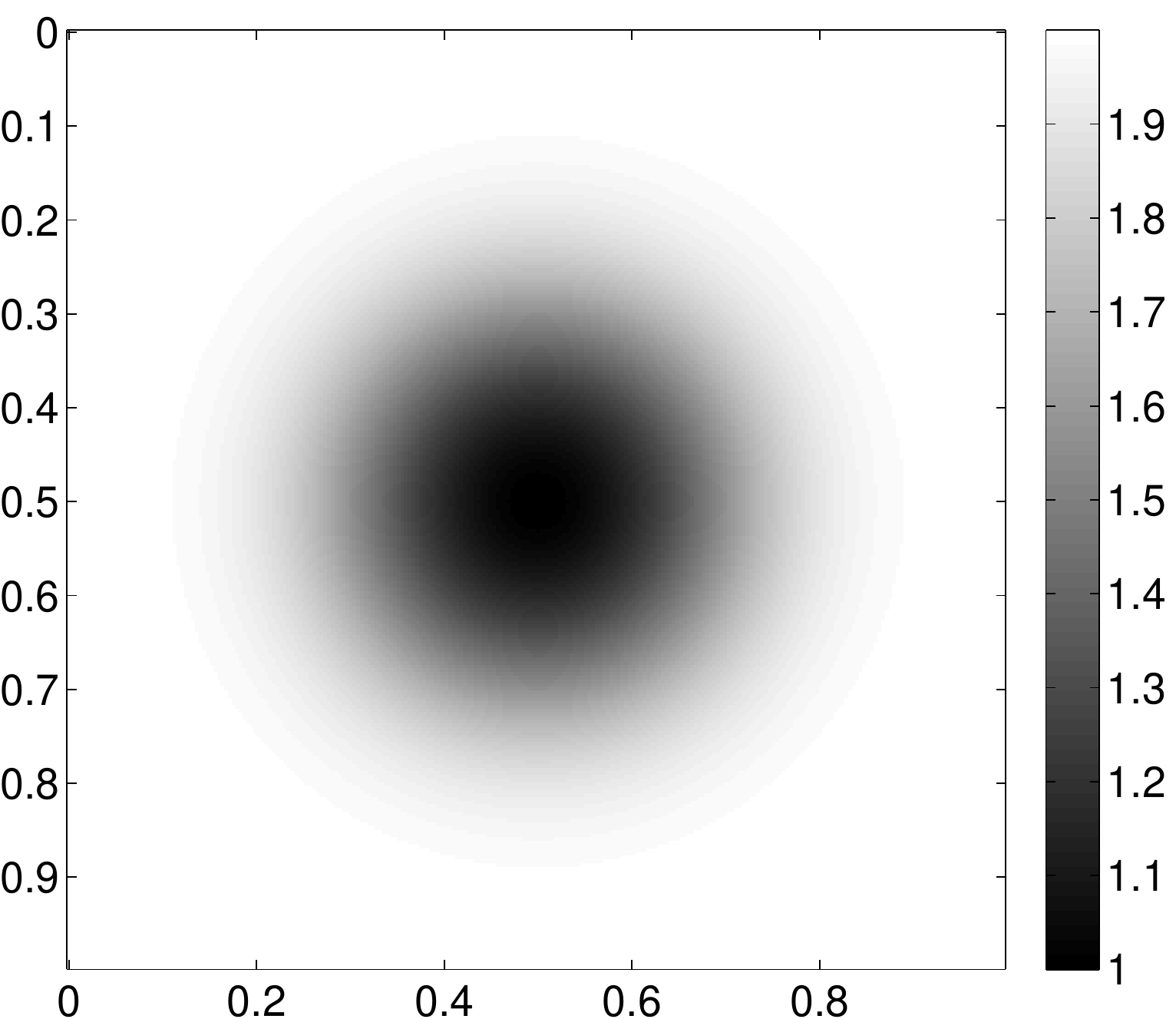}\\
  \end{center}
  \caption{The sound speed $c(x)$ of Example 4.}
  \label{fig:hm2d_2}
\end{figure}

\begin{table}[h!]
  \begin{center}
    \begin{tabular}{|c|ccc|}
      \hline
      $(w/2\pi,N)$  & Unconditioned & $M_1$ & $M_2$ \\
      \hline
      (4,64)	&3.46e+03 (1.30e+01)		&6.75e+01 (5.00e-01)	&4.25e+01 (3.20e-01)\\
      (8,128)	&1.06e+04 (1.39e+02)		&2.10e+02 (5.80e+00)	&1.16e+02 (3.19e+00)\\
      (16,256)	&3.51e+04 (1.93e+03)		&1.56e+03 (2.16e+02)	&6.81e+02 (9.56e+01)\\
      (32,512)	&		                &1.55e+03 (1.12e+03)	&6.46e+02 (4.63e+02)\\
      \hline
    \end{tabular}
  \end{center}
  \caption{
    The results of Example 4. For each test, we list the number of iterations and the 
    running time (in parenthesis).
  }    
  \label{tbl:hm2d_2}
\end{table}

In each of these two examples, we are able to use only 2 to 3 terms in
the expansion of the symbol of $M_j^{-1}$. The results in Tables
\ref{tbl:hm2d_1} and \ref{tbl:hm2d_2} show that the preconditioners
$M_1$ and $M_2$ reduce the number of iterations by a factor of 20 to
50, and the running time by a factor of 10 to 25.  We also observe
that the preconditioner $M_2$ outperforms $M_1$ by a factor of 2, in
line with observations in \cite{Erl}, where the complex constant
appearing in front of the $\omega^2/c^2(x)$ term in $M_1$ and $M_2$
was optimized.

Let us also note that we only consider the complex-shifted Laplace
preconditioner in isolation, without implementing any additional
deflation technique. Those seem to be very important in practice
\cite{ErlNab}.



\subsection{Polarization of wave operator}

Another application of the discrete symbol calculus is to ``polarize''
the initial condition of linear hyperbolic systems. We consider the
second order wave equation with variable coefficients
\[
  \begin{cases}
    u_{tt} - \div (\alpha(x) \grad u) = 0\\
    u(0,x) = u_0(x) \\
    u_t(0,x) = u_1(x)
  \end{cases}
\]
with the extra condition $\int u_1(x) d x = 0$. Since the operator
$L:= - \div (\alpha(x) \grad)$ is symmetric positive definite, its square
root $P := L^{1/2}$ is well defined. We can use $P$ to factorize the
equation into 
\[
(\pd_t + i P) (\pd_t - i P) u = 0.
\]
The solution $u(t,x)$ can be represented as
\[
u(t,x) = e^{i t P} u_+(x) + e^{-i t P} u_-(x)
\]
where the polarized components $u_+(x)$ and $u_-(x)$ of the initial
condition are given by
\[
u_+ = \frac{u_0 + (iP)^{-1} u_1}{2}
\quad\mbox{and}\quad
u_- = \frac{u_0 - (iP)^{-1} u_1}{2}.
\]
We first use the discrete symbol calculus to compute the operator
$P^{-1}$.  Once $P^{-1}$ is available, the computation of $u_+$ and
$u_-$ is straightforward.

\paragraph{Example 5. } The coefficient $\alpha(x)$ in this example is
shown in Figure \ref{fig:po2d_1} (a). The initial condition is set to
be a plane wave solution of the unit sound speed:
\[
u_0(x) = e^{2 \pi i k x}
\quad\mbox{and}\quad
u_1(x) = -2 \pi i |k| e^{2 \pi i k x},
\]
where $k$ is a fixed wave number. If $\alpha(x)$ were equal to $1$
everywhere, this initial condition itself would be polarized and the
component $u_+(x)$ would be zero. However, due to the inhomogeneity in
$\alpha(x)$, we expect both $u_+$ and $u_-$ to be non-trivial after the
polarization. The real part of $u_+(x)$ is plotted in Figure
\ref{fig:po2d_1} (b). We notice that the amplitude $u_+(x)$ scales
with the difference between the coefficient $\alpha(x)$ and $1$. This is
compatible with the asymptotic analysis of the operator $P$ for large
wave number. The figure of $u_-(x)$ is omitted as visually it is close
to $u_0(x)$.

\begin{figure}[h!]
  \begin{center}
    \begin{tabular}{cc}
      \includegraphics[height=1.5in]{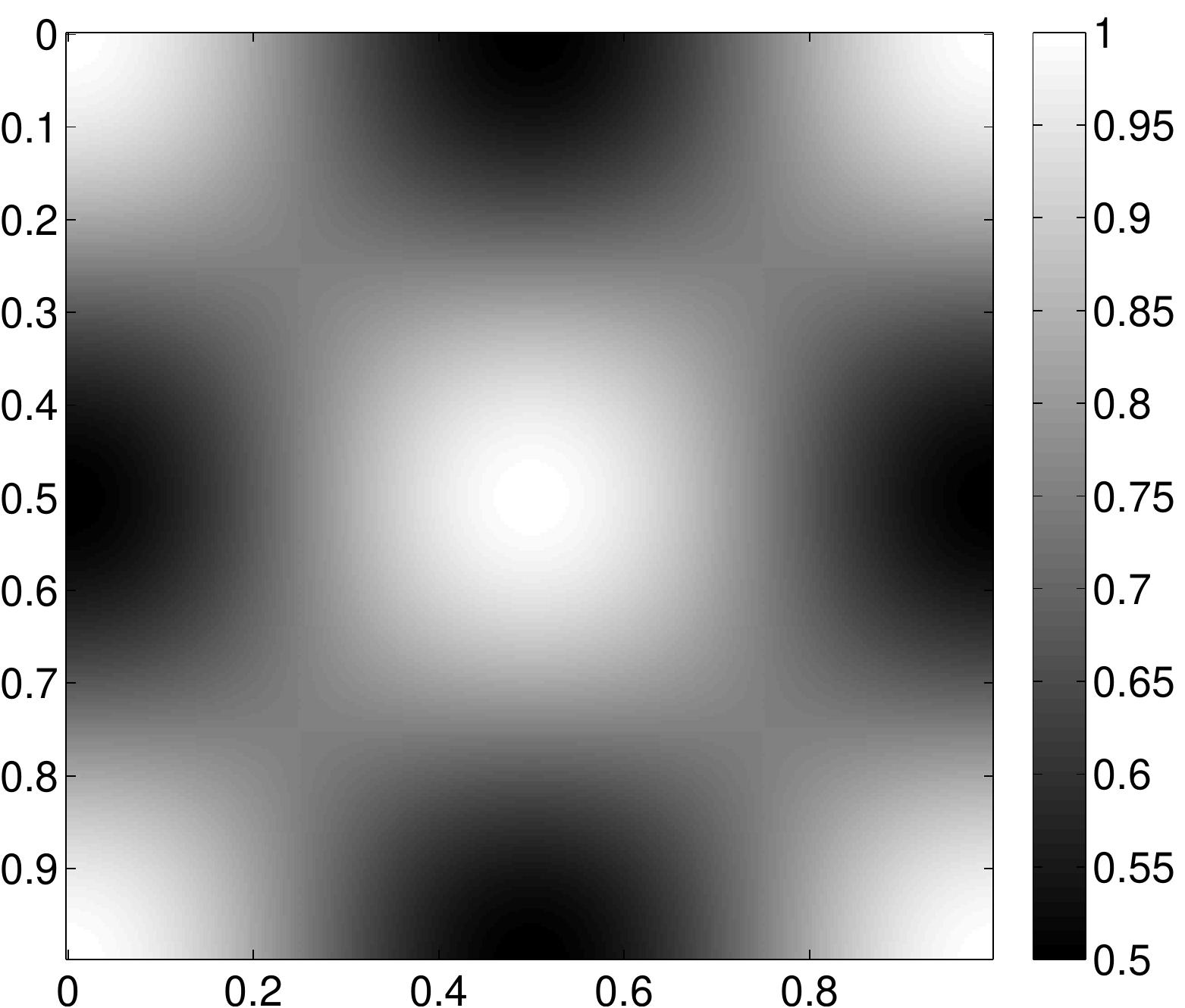} &
      \includegraphics[height=2.5in]{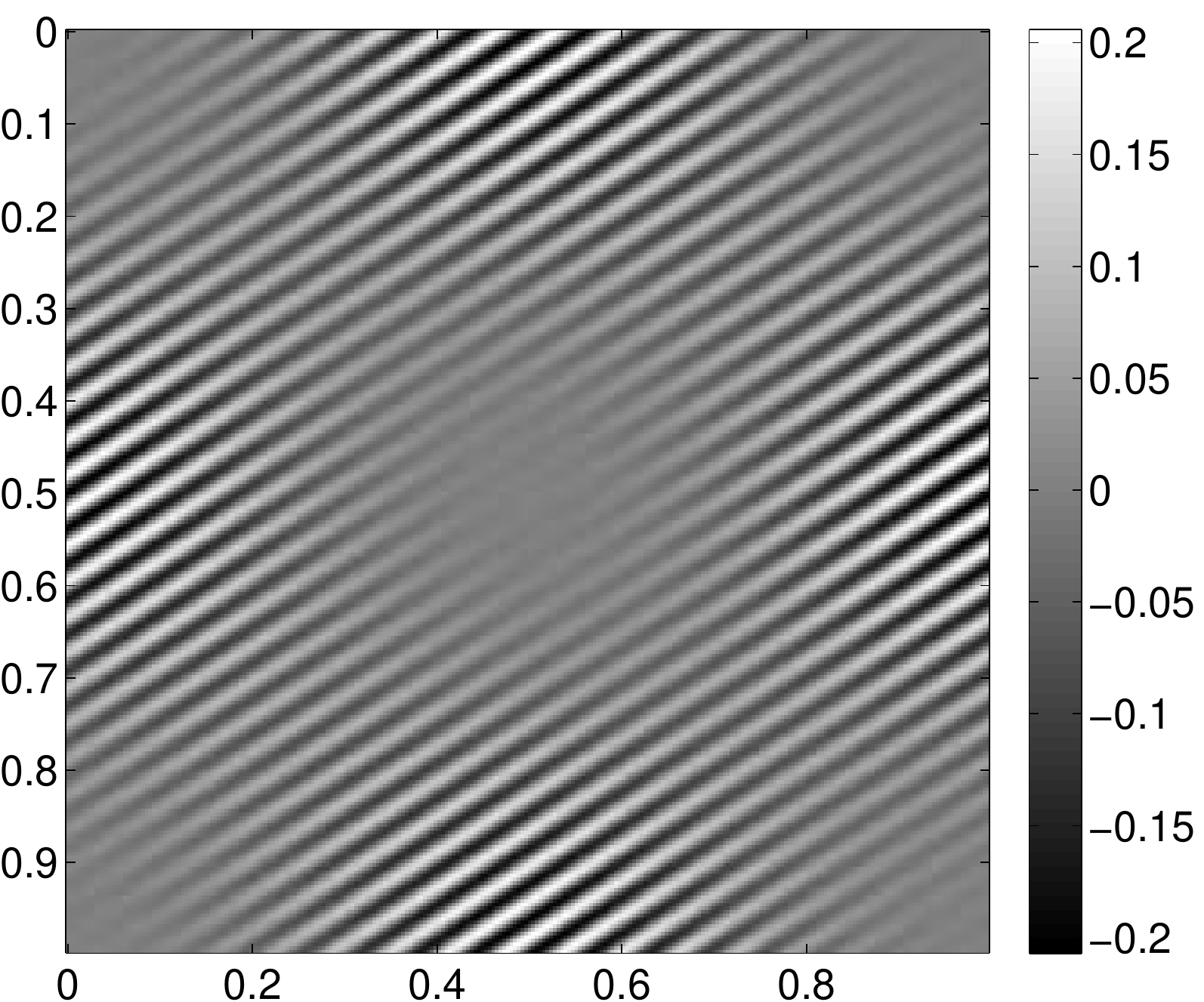}\\
      (a) & (b)
    \end{tabular}
  \end{center}
  \caption{Example 5. The real part of the polarized component $u_+ =
    (u_0 + (iP)^{-1} u_1) / 2$. Notice that the amplitude of $u_+(x)$
    scales with the quantity $\alpha(x)-1$.  $u_- = (u_0 - (iP)^{-1} u_1) /
    2$ is omitted since visually it is close to $u_0$.  }
  \label{fig:po2d_1}
\end{figure}

\paragraph{Example 6.} The coefficient $\alpha(x)$ here is a random
bandlimited function shown in Figure \ref{fig:po2d_2} (a). The initial
conditions are the same as the ones used in Example 5. The real part
of the polarized component $u_+(x)$ is shown in Figures
\ref{fig:po2d_2} (b). Again, we see that the dependence of the
amplitude of $u_+(x)$ on the difference between $\alpha(x)$ and $1$.

\begin{figure}[h!]
  \begin{center}
    \begin{tabular}{cc}
      \includegraphics[height=1.5in]{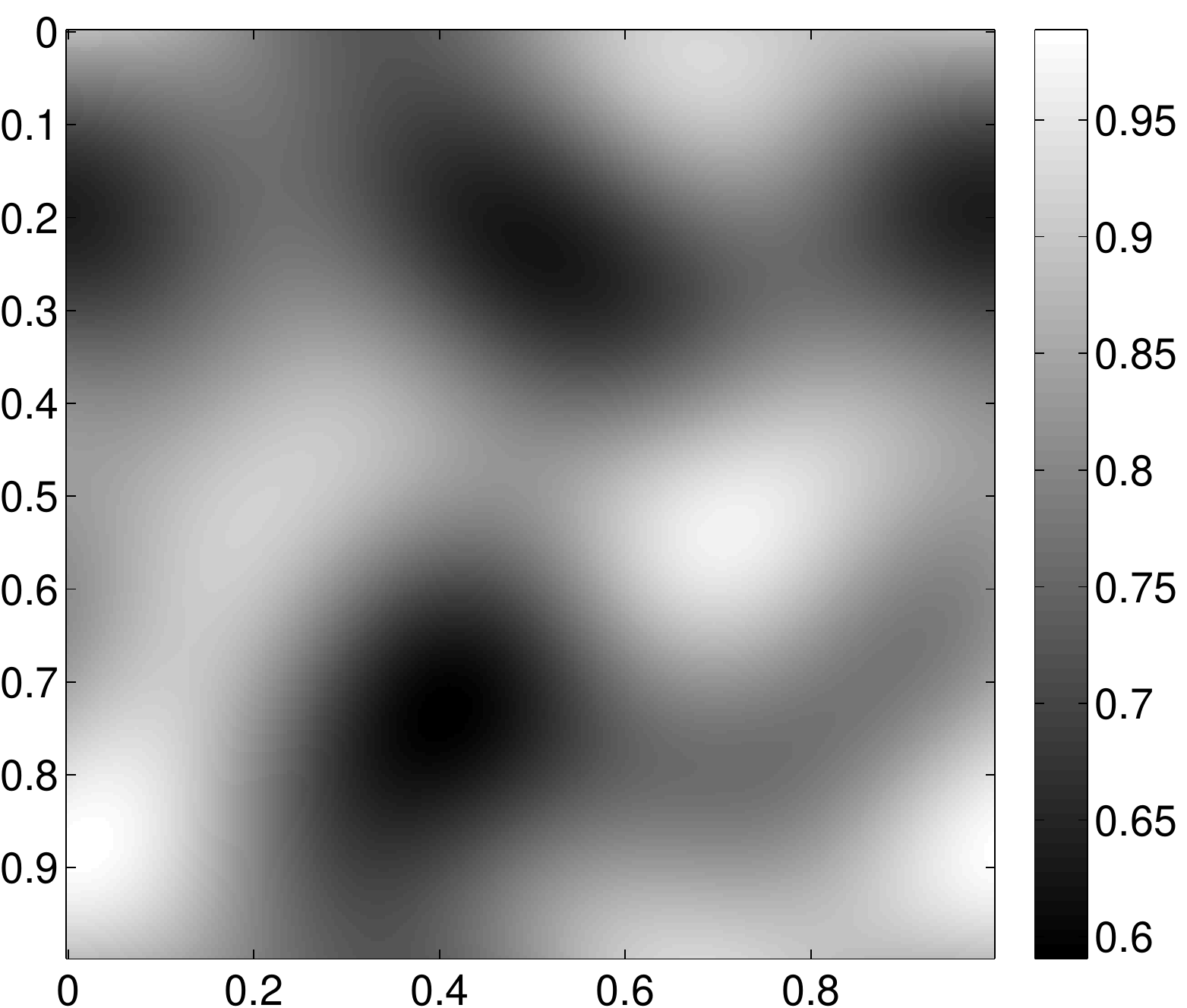} &
      \includegraphics[height=2.5in]{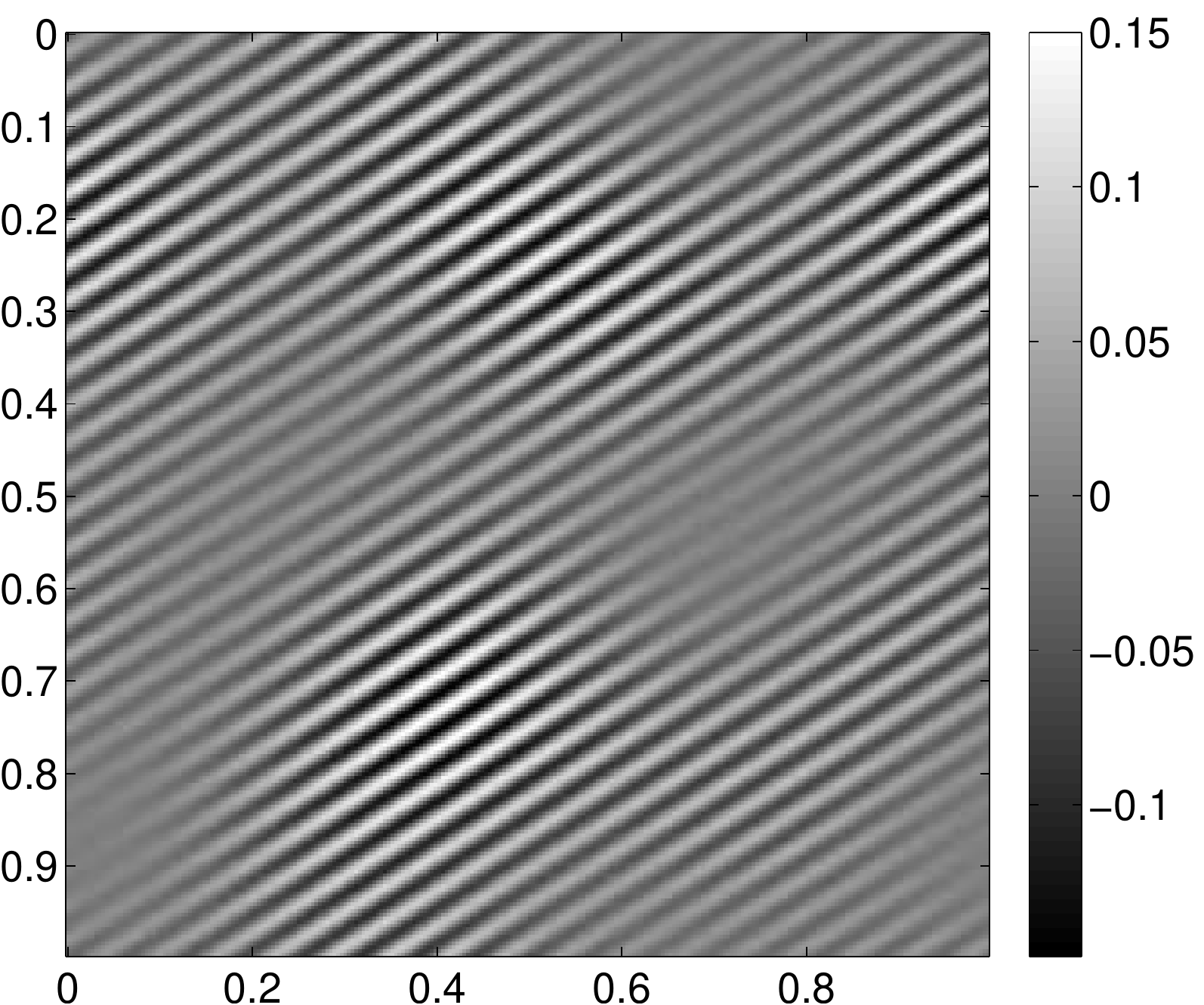}\\
      (a) & (b)
    \end{tabular}
  \end{center}
  \caption{Example 6. The real part of the polarized component $u_+ =
    (u_0 + (iP)^{-1} u_1) / 2$. Notice that the amplitude of $u_+(x)$
    scales with the quantity $\alpha(x)-1$.  $u_- = (u_0 - (iP)^{-1} u_1) /
    2$ is omitted since visually it is close to $u_0$.  }
  \label{fig:po2d_2}
\end{figure}

\subsection{Seismic depth migration}
\label{sec:sdm}
The setup is the same as in the Introduction: consider the Helmholtz equation
\begin{equation}\label{eq:helm2}
  u_{zz} + \Delta_{\bot} + \frac{\omega^2}{c^2(x)} u = 0
\end{equation}
for $z \ge 0$. The transverse variables are either $x \in [0,1]$ in the 1D case, or $x \in [0,1]^2$ in the 2D case. (Our notations support both cases.) Given the wave field $u(x,0)$ at $z=0$, we want to compute the wavefield for $z>0$. For simplicity, we consider periodic boundary conditions in $x$ or $(x,y)$, and no right-hand side in (\ref{eq:helm2}).

As mentioned earlier, we wish to solve the corresponding SSR equation
\begin{equation}\label{eq:depth}
\left( \frac{\pd}{\pd z} - B(z) \right) u = 0,
\end{equation}
where $B(z)$ is a regularized square root of $- \Delta_\bot - \omega^2/c^2(x,z)$. Call $\xi$ the variable(s) dual to $x$. The locus where the symbol $4 \pi^2 |\xi|^2 - \omega^2/c^2(x,z)$ is zero is called the characteristic set of that symbol; it poses well-known difficulties for taking the square root. To make the symbol elliptic (here, negative) we simply introduce
\[
a(z;x,\xi) = g \left( 4 \pi^2 | \xi |^2, \frac{1}{2} \frac{\omega^2}{c^2(x,z)}  \right) - \frac{\omega^2}{c^2(x,z)},
\]
where $g(x,M)$ is a smooth version of the function $\min(x,M)$. Call
$b(z;x,\xi)$ the symbol-square-root of $a(z;x,\xi)$, and $\tilde{B}(z)
= b(z;x,i \nabla_x)$ the resulting operator. A large-frequency cutoff
now needs to be taken to correct for the errors introduced in
modifying the symbol as above. Consider a function $\chi(x)$ equal to
1 in $(-\infty, -2]$, and that tapers off in a $C^\infty$ fashion to
zero inside $[-1, \infty)$. We can now consider $\chi(b(z;x,\xi))$ as
the symbol of a smooth ``directional'' cutoff, defining an operator $X
= \chi(b(z;,x,-i\nabla_x))$ in the standard manner. The operator
$\tilde{B}(z)$ should then be modified as
\[
X \tilde{B}(z) X.
\]
At the level of symbols, this is of course $(\chi(b)) \sharp b \sharp
(\chi(b))$ and should be realized using the composition routine of
discrete symbol calculus.

Once this modified square root has been obtained, it can be used to
solve the SSR equation. It is easy to check that, formally, the
operator mapping $u(x,0)$ to $u(x,z)$ can be written as
\[
E(z) = \exp \int_0^z B(s) \, ds.
\]
If $B(s)$ were to make sense, this formula would be exact. Instead, we
substitute $X \tilde{B}(s) X$ for $B(s)$, and compute $E(z)$ using
discrete symbol calculus. We intend for $z$ to be small, i.e.,
comparable to the wavelength of the field $u(x,0)$, in order to satisfy
a CFL-type condition. With this type of restriction on $z$, the symbol
of $E(z)$ remains sufficiently smooth for the DSC algorithm to be
efficient\footnote{For larger $E(z)$ would be a Fourier integral
  operator, and a phase would be needed in addition to a symbol. We
  leave this to a future project.}: the integral over $s$ can be discretized by
a quadrature over a few points, and the operator exponential can be
realized by scaling-and-squaring as explained earlier.

The effect of the cutoffs $X$ is to smoothly remove 1) turning rays,
i.e, waves that would tend to travel in the horizontal direction or
even overturn, and 2) evanescent waves, i.e., waves that decay
exponentially in $z$ away from $z = 0$. This is why $X$ is called a
directional cutoff. It is important to surround $\tilde{B}$ with
\emph{two} cutoffs to prevent the operator exponential from
introducing energy near the characteristic set of the generating
symbol $4 \pi^2 |\xi|^2 - \omega^2/c^2(x,z)$. This precaution would be
hard to realize without an accurate way of computing compositions
(twisted product). Note that the problem of controlling the frequency
leaking while taking an operator exponential was already addressed by
Chris Stolk in \cite{Stolk2}, and that our approach provides another,
clean solution.

We obtain the following numerical examples.

\paragraph{Example 7.} Let us start by considering the 1D case. The
sound speed $c(x)$ in this example is a Gaussian waveguide (see Figure
\ref{fig:ds1d_1} (a)). We set $\omega$ to be $100 \cdot 2\pi$ in this case.

We perform two tests in this example. In the first test, we select the
boundary condition $u(x,0)$ to be equal to one. This corresponds to
the case of a plane wave entering the waveguide. The solution of
\eqref{eq:depth} is shown in Figure \ref{fig:ds1d_1} (b). As $z$
grows, the wave front starts to deform and the caustics appears at
$x=1/2$ when the sound speed $c(x)$ is minimum.

In the second test of this example, we choose the boundary condition
$u(x,0)$ to be a Gaussian wave packet localized at $x=1/2$. The wave
packet enters the wave guide with an incident angle about $45$
degrees. The solution is shown in Figure \ref{fig:ds1d_1} (c).  Even
though the wave packet deforms its shape as it travels down the wave
guide, it remains localized. Notice that the packet bounces back and
forth at the regions with large sound speed $c(x)$, which is the
result predicted by geometric optics in the high frequency regime.

\begin{figure}[h!]
  \begin{center}
    \begin{tabular}{c}
      \includegraphics[height=1.5in]{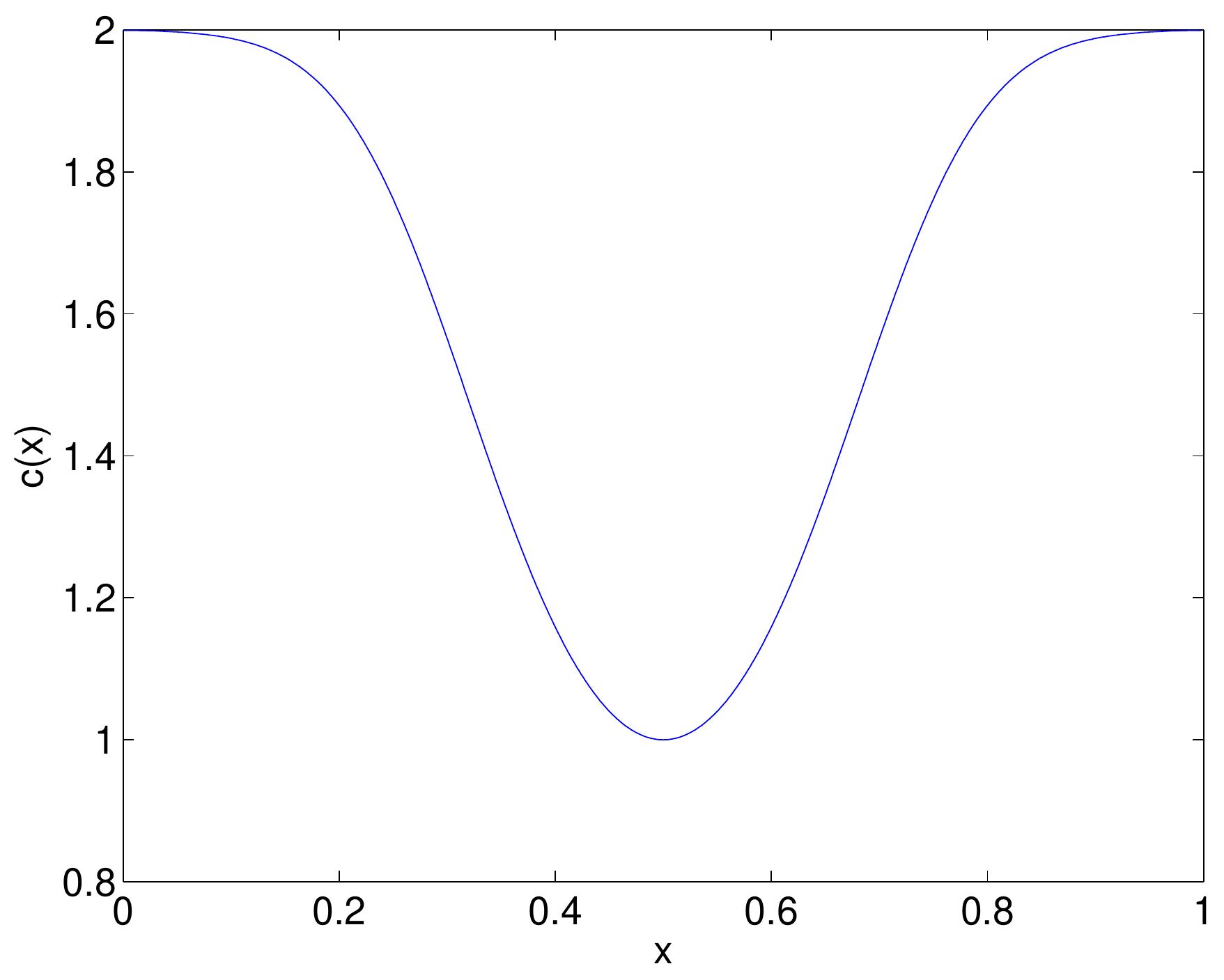}\\
      (a)
    \end{tabular}
    \begin{tabular}{cc}
      \includegraphics[height=2.5in]{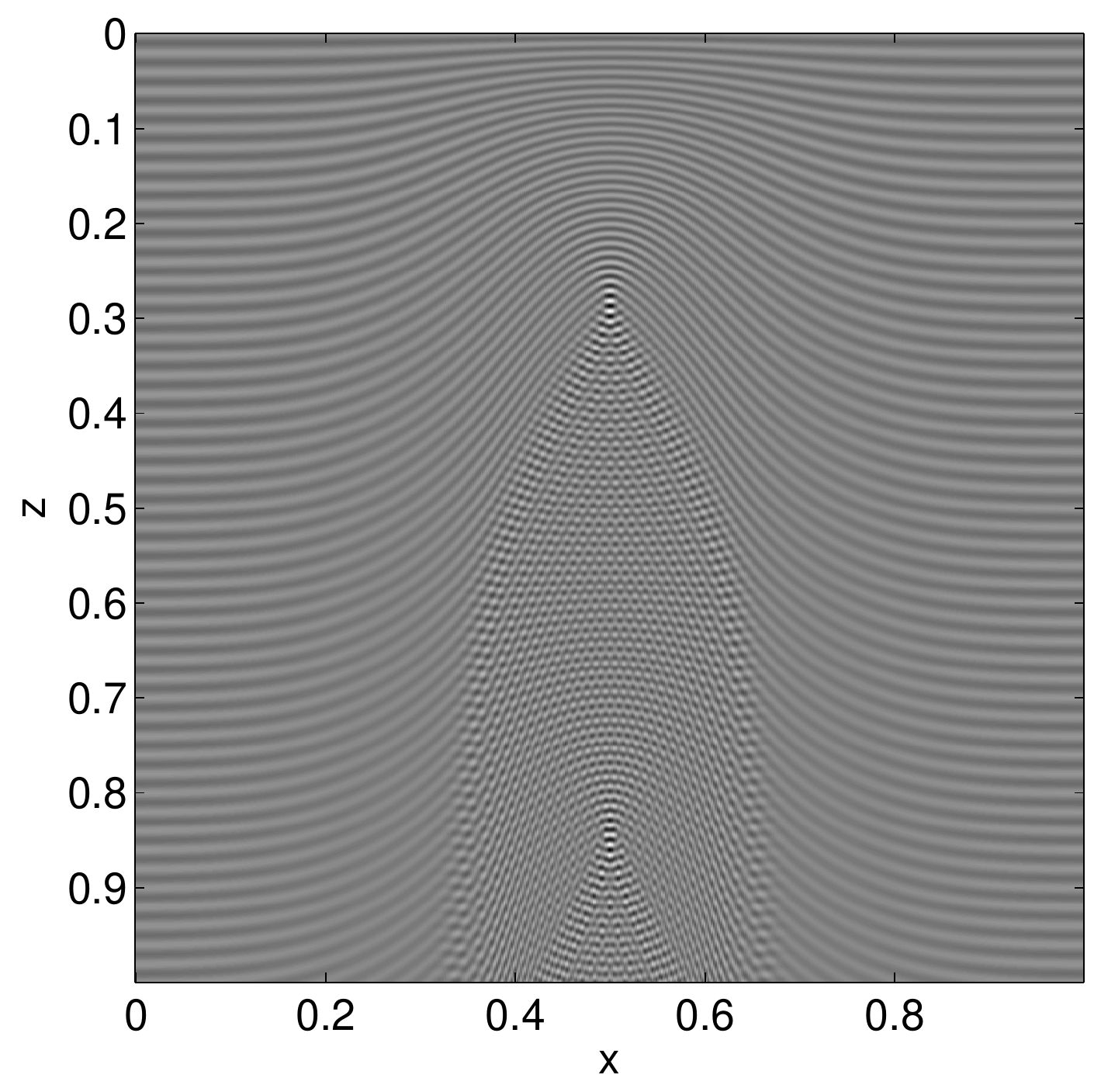} &
      \includegraphics[height=2.5in]{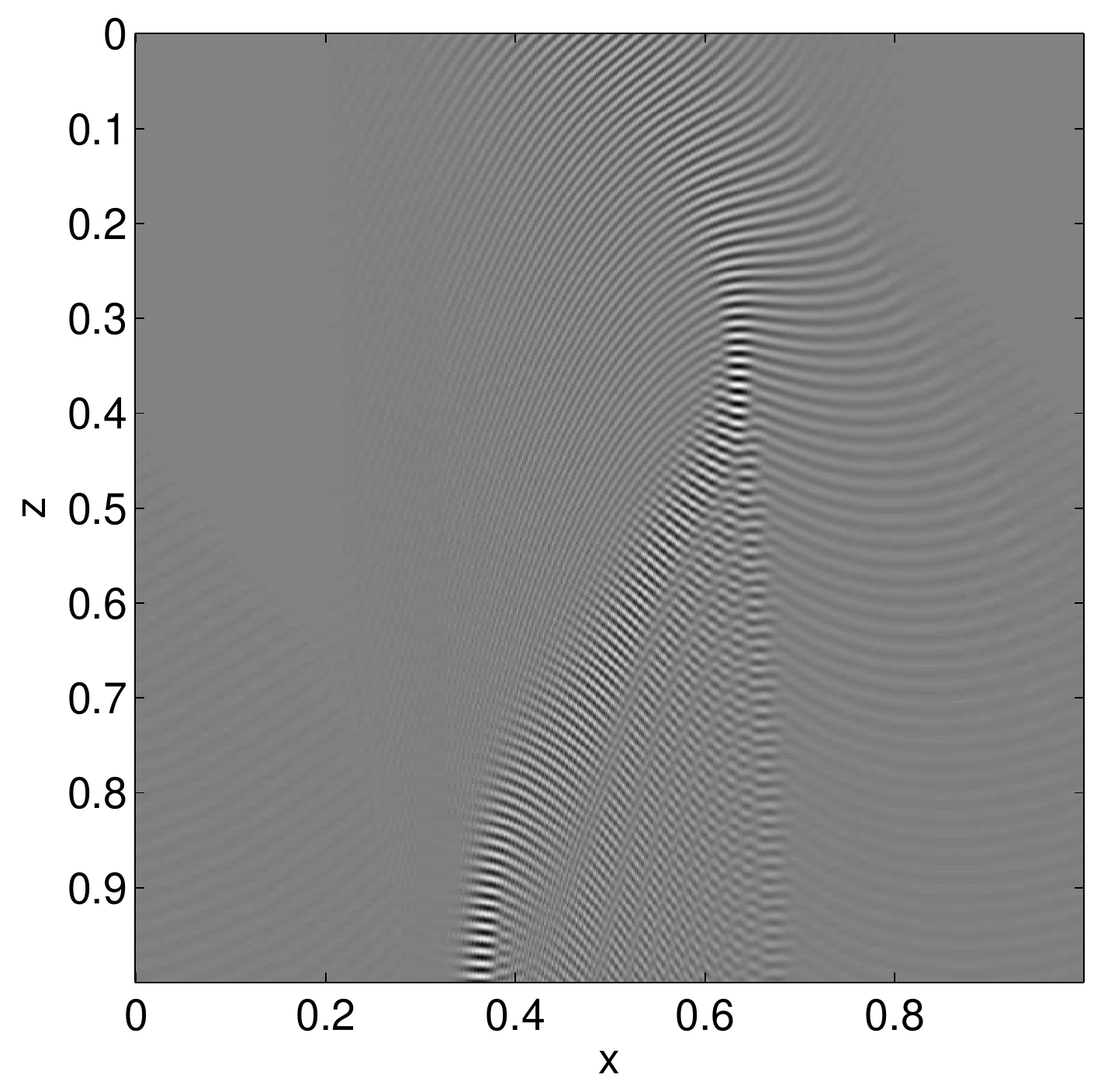}\\
      (b) & (c)
    \end{tabular}
  \end{center}
  \caption{Example 7. (a) sound speed $c(x)$.
    (b) the solution when the boundary condition $u(x,0)$ is a constant.
    (c) the solution when the boundary condition $u(x,0)$ is a wave packet.
  }
  \label{fig:ds1d_1}
\end{figure}

\paragraph{Example 8.} Let us now consider the 2D case. The sound
speed used here is a two dimensional Gaussian waveguide (see Figure
\ref{fig:ds2d_1} (a)). We again perform two different tests. In the
first test, the boundary condition $u(x,y,0)$ is equal to a constant.
The solution at the cross section $y=1/2$ is shown in Figure
\ref{fig:ds2d_1} (b). In the second test, we choose the boundary
condition to be a Gaussian wave packet with oscillation in the $x$
direction. The packet enters the waveguide with an incident angle of
45 degrees. The solution at the cross section $y=1/2$ is shown in
Figure \ref{fig:ds2d_1} (c). Both of these results are similar to the
ones of the one dimensional case.

\begin{figure}[h!]
  \begin{center}
    \begin{tabular}{c}
      \includegraphics[height=1.5in]{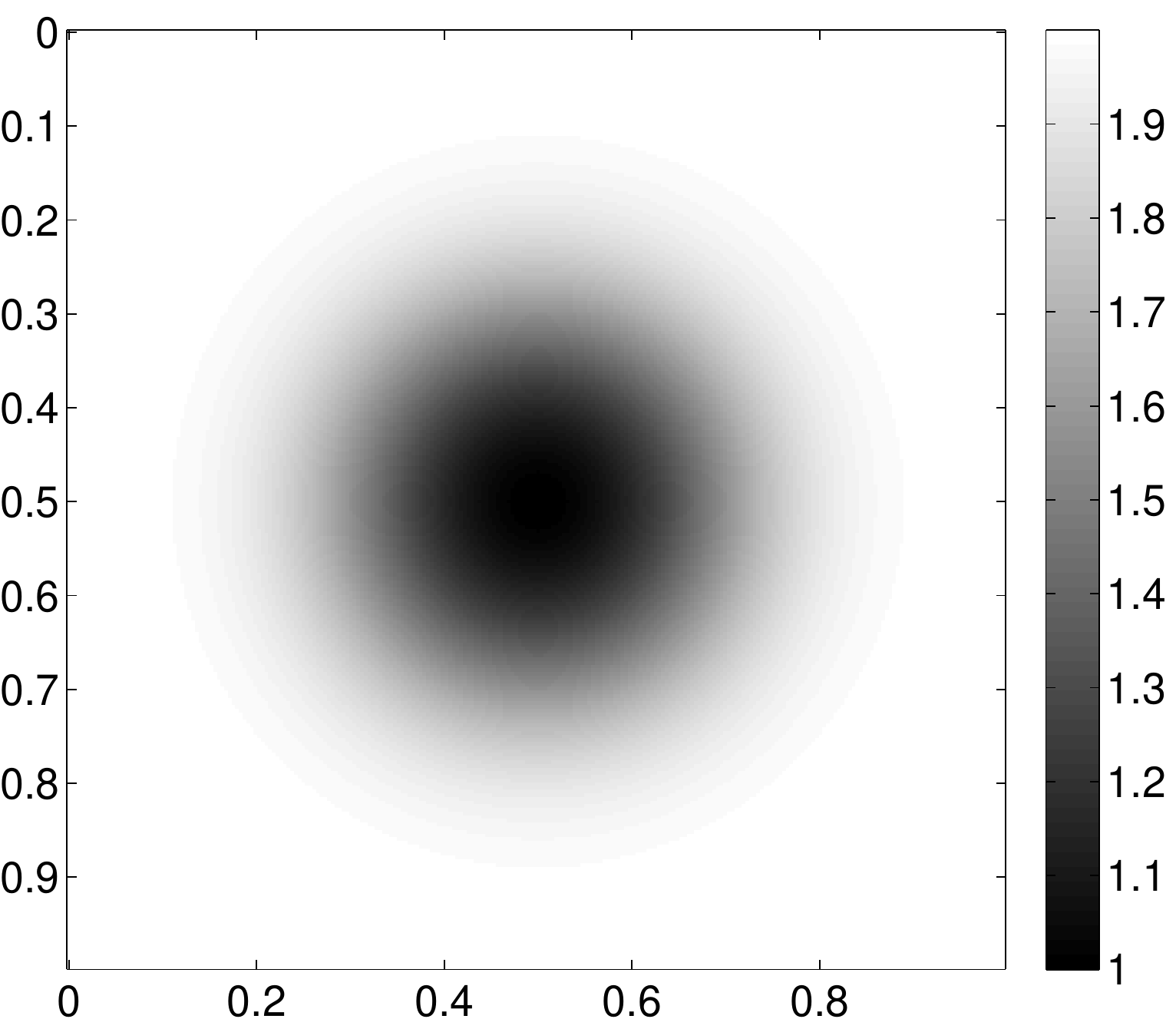}\\
      (a)
    \end{tabular}
    \begin{tabular}{cc}
      \includegraphics[height=2.5in]{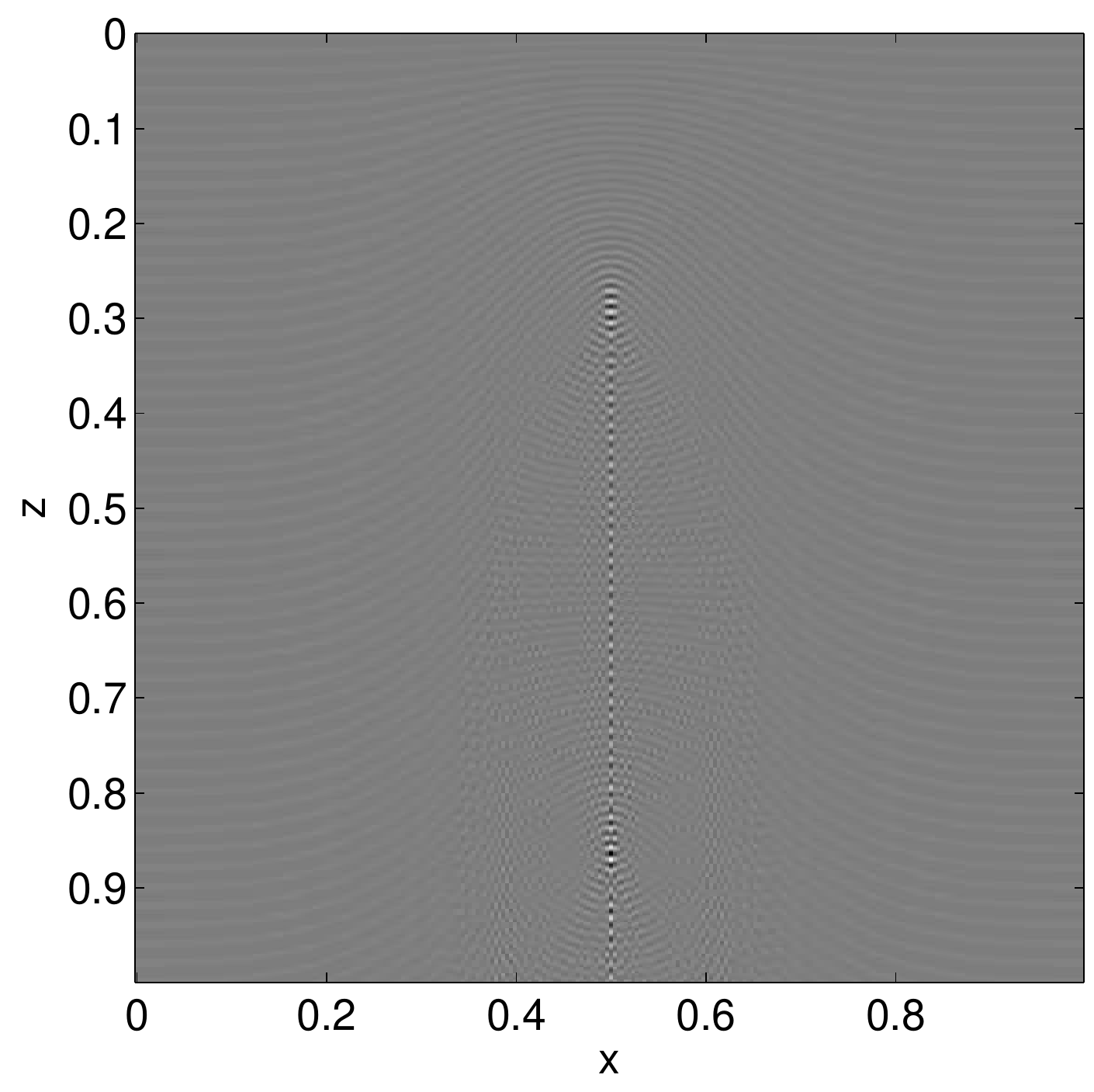} &
      \includegraphics[height=2.5in]{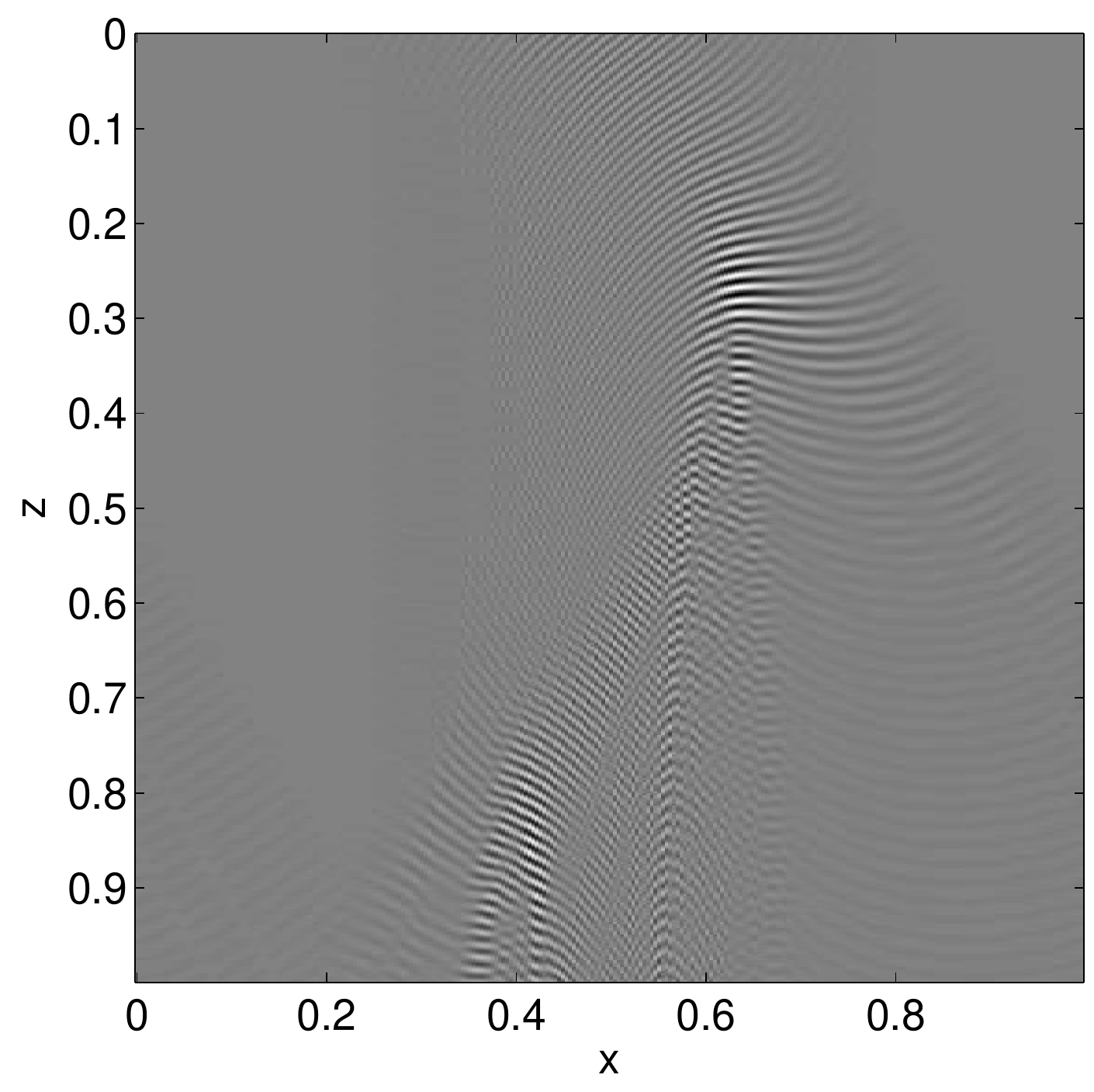}\\
      (b) & (c)
    \end{tabular}
  \end{center}
  \caption{Example 8. (a) sound speed $c(x)$.
    (b) the solution at the cross section $y=1/2$ when the boundary condition $u(x,y,0)$ is a constant.
    (c) the solution at the cross section $y=1/2$ when the boundary condition $u(x,y,0)$ is a wave packet.
  }
  \label{fig:ds2d_1}
\end{figure}


\section{Discussion}\label{sec:diss}




\subsection{Other domains and boundary conditions}

An interesting question is what form discrete symbol calculus should
take when other boundary conditions than periodic are considered, or
on more general domains than a square.

One can speculate that the discrete Sine transform (DST) should be
used as $e_\lambda$ for Dirichlet boundary conditions on a rectangle,
or the discrete Cosine transform (DCT) for Neumann on a rectangle.
Whatever choice is made for $e_\lambda$ should dictate the definition
of the corresponding frequency variable $\xi$. A more robust approach
could be to use spectral elements for more complicated domains, where
the spectral domain would be defined by Chebyshev expansions. One may
also imagine expansions in prolate spheroidal wavefunctions.
Regardless of the type of expansions chosen, the theory of
pseudodifferential operators on bounded domains is a difficult topic
that will need to be understood.

Another interesting problem is that of designing absorbing boundary
conditions in variable media. We hope that the ideas of symbol
expansions will provide new insights for this question.


\subsection{Other equations}

Symbol-based methods may help solve other equations than elliptic PDE.
The heat equation in variable media comes to mind: its fundamental
solution has a nice pseudodifferential smoothing form that can be
computed via scaling-and-squaring.

A more challenging example are hyperbolic systems in variable, smooth
media. The time-dependent Green's function of such systems is not a
pseudodifferential operator, but rather a Fourier integral operator
(FIO), where $e^{2 \pi i x\cdot \xi} a(x,\xi)$ needs to be replaced by
$e^{\Phi(x,\xi)} a(x,\xi)$. We regard the extension of discrete symbol
calculus to handle such phases a very interesting problem, see
\cite{FastFIO, OptimFIO} for preliminary results on fast application of FIO.

\appendix

\section{Appendix}

\begin{proof}[Proof of Lemma \ref{teo:L2bdd}.]
As previously, write
\[
\hat{a}_\lambda(\xi) = \int e^{-2 \pi i x \cdot \lambda} a(x,\xi) \, dx
\]
for the Fourier series coefficients of $a(x,\xi)$ in $x$. Then we can express (\ref{eq:psido-sum}) as
\[
(A f)(x) = \sum_{\xi \in \Z^d}  e^{2 \pi i x \cdot \xi} \sum_{\lambda \in \Z^d} e^{2 \pi i x \cdot \lambda}  \hat{a}_\lambda(\xi) \hat{f}(\xi).
\]
We seek to interchange the two sums. Since $a(x,\xi)$ is differentiable $d'$ times, we have
\[
(1 + | 2 \pi \lambda |^{d'} ) \, \hat{a}_\lambda(\xi) = \int_{[0,1]^d} e^{-2 \pi i x \cdot \lambda} (1 + (- \Delta_x)^{d'/2}) a(x,\xi) \, dx,
\]
hence $|\hat{a}_\lambda(\xi)| \leq (1 + | 2 \pi \lambda |^{d'} )^{-1} \| (1 + (- \Delta_x)^{d'/2}) a(x,\xi) \|_{L^\infty_x}$. The exponent $d'$ is chosen so that $\hat{a}_\lambda(\xi)$ is absolutely summable in $\lambda \in \Z^d$. If in addition we assume $\hat{f} \in \ell_1(\Z^d)$, then we can apply Fubini's theorem and write
\[
(A f)(x) = \sum_{\lambda \in \Z^d} A^\lambda f(x),
\]
where $A^\lambda f(x) = e^{2 \pi i x \cdot \lambda} (M_{\hat{a}_\lambda(\xi)} f)(x)$, and $M_g$ is the operator of multiplication by $g$ on the $\xi$ side. By Plancherel, we have
\[
\| A^\lambda f \|_{L^2} = \| M_{\hat{a}_\lambda(\xi)} f \|_{L^2} \leq \sup_{\xi} |\hat{a}_\lambda(\xi)| \cdot \| f \|_{L^2}.
\]
Therefore, by the triangle inequality,
\begin{align*}
\| A f \|_{L^2} &\leq \sum_{\lambda \in \Z^d} \| A^\lambda f \|_{L^2} \\
&\leq \sum_{\lambda \in \Z^d} (1 + | 2 \pi \lambda |^{d'} )^{-1} \cdot \sup_{x,\xi} | (1 + (- \Delta_x)^{d'/2}) a(x,\xi) | \cdot \| f \|_{L^2}
\end{align*}
As we have seen, the sum over $\lambda$ converges. This proves the theorem when $f$ is sufficiently smooth; a classical density argument shows that the same conclusion holds for all $f \in L^2([0,1]^d)$.

\end{proof}

\end{document}